\newtheorem{definition}{Definition}[section]
\newtheorem{example}[definition]{Example}
\newtheorem{proposition}[definition]{Proposition}
\newtheorem{theorem}[definition]{Theorem}
\newtheorem{lemma}[definition]{Lemma}
\newtheorem{remark}[definition]{Remark}
\DeclareMathOperator{\JSymbol}{\mathcal{J}}
\def\R{\mathbb R}
\DeclareMathOperator{\LspaceSymbol}{\mathbf{L}}
\newcommand{\Lspace}[1]{\LspaceSymbol^{#1}}
\newcommand{\Lpspace}[1][p]{\Lspace{{#1}}}
\newcommand{\LGspace}[1][G]{\Lspace{{#1}}}
\DeclareMathOperator{\WspaceSymbol}{\mathbf{W}}
\newcommand{\Wspace}[1]{\WspaceSymbol^{#1}}
\newcommand{\WTspace}[1]{\WspaceSymbol^{#1}_T}
\newcommand{\WLGspace}[1][G]{{\Wspace{1}}\Lspace{{#1}}}
\newcommand{\WTLGspace}[1][G]{{\WTspace{1}}\Lspace{{#1}}}
\newcommand{\norm}[1]{\|#1\|}
\newcommand{\LGnorm}[2][G]{\norm{#2}_{\LGspace[{#1}]}}
\newcommand{\WLGnorm}[2][G]{\norm{#2}_{\WLGspace[{#1}]}}
\newcommand{\WLGnormother}[2][G]{\norm{#2}_{1,\WLGspace[{#1}]}}
\newcommand{\inner}[2]{\langle #1,#2\rangle}
\newcommand{\imb}[2]{C_{#1,#2 }}
\newcommand{\imbinftG}{\imb{\infty}{\WLGspace}}
\newcommand{\modularG}[1]{R_G(#1)}
\title[Periodic solutions in anisotropic Orlicz-Sobolev space ]{
Mountain pass type periodic solutions for Euler-Lagrange equations in anisotropic
Orlicz-Sobolev space
}
\author{M. Chmara}
\author{J. Maksymiuk}
\address{Department of Technical Physics and Applied Mathematics, Gda\'{n}sk University of
Technology, Narutowicza 11/12, 80-952 Gda\'{n}sk, Poland}
\email{magdalena.chmara@pg.edu.pl, jakub.maksymiuk@pg.edu.pl}
\begin{document}

\begin{abstract}
Using the Mountain Pass Theorem, we establish the existence of periodic solution for
Euler-Lagrange equation.   Lagrangian consists of kinetic part (an anisotropic G-function),
potential part $K-W$ and a forcing term.
We consider two situations: $G$ satisfying \ref{delta2}$\cap$\ref{nabla2} in infinity and globally. We give conditions on the growth of the potential near zero for both situations.
\end{abstract}

\keywords{
 anisotropic Orlicz-Sobolev space,
Euler-Lagrange equations,
variational functional,
Mountain Pass Theorem,
Palais Smale condition
}
\subjclass[2010]{46E30 ,  46E40}

\maketitle
\section{Introduction}
We consider the  second order system
\begin{equation}
\tag{ELT}
\label{eq:ELT}
\begin{cases}
\frac{d}{dt}L_v(t,u(t),\dot u(t))=L_x(t,u(t),\dot u(t))\quad \text{ for a.e. }t\in[-T,T]\\
u(-T)=u(T)
\end{cases}
\end{equation}
where $L\colon [-T,T]\times\R^N\times\R^N\to \R$ is given by
\[
L(t,x,v)=G(v)+V(t,x)+\inner{f(t)}{x}.
\]
We assume that $G$ is a differentiable G-function and $V$ satisfies suitable growth conditions.
If $G(v)=\frac1p|v|^p$ then the equation \eqref{eq:ELT} reduces to p-laplacian.
More general case is $G(v)=\phi(|v|)$, where $\phi$ is convex and nonnegative. In the above cases, $G$ depends on norm $|v|$ and its growth is the
same in all directions (isotropic).  In this paper we consider the situation when the growth of $G$ is different in different directions (anisotropic) e.g. $G(x,y)=|x|^p+|y|^q$.

Existence of periodic solutions for the problem \eqref{eq:ELT} was investigated in many papers, e.g.: \cite{AciMaz17}
(anisotropic case), \cite{AciBurGiuMazSch15} (isotropic case), \cite{PasWan16} ($(p,q)$-laplacian), \cite{XuTan07,MaZha09}  ($p$-laplacian),  \cite{MawWil89}  
(laplacian) and many others.

This paper is motivated by \cite{Dao16,Ter12,IzyJan05}, where
the existence of homoclinic solution of $\frac{d}{dt}L_v(t,u(t),\dot u(t))=L_x(t,u(t),\dot u(t))$ is investigated (see also \cite{ZelRab91, LvLu12}).
In all these papers an intermediate step is to show, using the Mountain Pass Theorem, that corresponding periodic problem has a solution.

We want to adapt methods from \cite{Dao16} to anisotropic Orlicz-Sobolev space setting.
It turns out, that the Mountain pass geometry of action functional  is strongly depended on Simonenko indices $p_G$ and $q_G$  (see section \ref{sec:Gfacts}).  
To show that the action  functional satisfies the Palais-Smale condition we need index $q_G^{\infty}$. 
Similar observation can be found in \cite{Cle04, Bar17, Cle00, Le02} where the existence of elliptic systems via the  Mountain Pass Theorem is considered. In \cite{Bar17} authors deal with an anisotropic problem. The isotropic case is considered in \cite{Cle04, Cle00, Le02}.

We assume that:
\begin{enumerate}[label=$(A_{\arabic*})$]
\item
\label{V:G}

$G\colon \R^N\to[0,\infty)$ is a continuously differentiable G-function (i.e. G is convex, even,
$G(0)=0$ and $G(x)/|x|\to \infty$, as $|x|\to \infty$) satisfying $\Delta_2$ and $\nabla_2$
condition,

\item
\label{V:C1K-W}
$V(t,x)=K(t,x)-W(t,x)$, where $K,W\in \mathbf{C}^1([-T,T]\times\R^N,\R)$,

\item
\label{V:near0}
there exist $a\in\Lpspace[1]([-T,T],\R)$, $b>1$ and $\rho_0>0$ such that
\[
V(t,x)\geq b\,G(x)-a(t)\quad \text{ for $|x|\leq\rho_0$, $t\in[-T,T]$},
\]

\item\label{V:infty}
there exist $b_1>0$ and $p>1$ satisfying $|\cdot|^p\prec G$, such that
\[
\liminf_{|x|\to\infty}\frac{K(t,x)}{|x|^{p}}\geq b_1
\text{ uniformly in } t\in[-T,T]
\]
and
\[
\liminf_{|x|\to\infty}\frac{W(t,x)}{\max\{K(t,x),G(x)\}}>3
\text{ uniformly in } t\in[-T,T],
\]

\item\label{V:AR}	there exist $\nu\in\R$, $\mu>q_G^{\infty}+\nu$ and $\kappa\in
\Lpspace[1]([-T,T],[0,\infty))$ such that
\[
\inner{V_x(t,x)}{x}\leq (q_G^{\infty}+\nu)K(t,x)-\mu W(t,x)+\kappa(t)\quad
\text{ for $(t,x)\in[-T,T]\times\R^N$,}
\]
\item \label{V:0}$\int_{-T}^{T}V(t,0)\,dt=0$,

\item\label{f:G*normf<Cf} $f\in\LGspace[G^\ast]([-T,T],\R^N)$.
\end{enumerate}

Assumption \ref{V:near0}, \ref{V:infty} and \ref{V:AR} are  essential for the Mountain Pass Theorem. We need \ref{V:near0} to show that there exists $\alpha>0$ such that functional
\begin{equation}
\tag{$\JSymbol$}
\JSymbol(u)=\int_{-T}^{T} G(\dot u)+V(t,u)+\inner{f}{u}\,dt.
\end{equation} is greater than $\alpha$ on the boundary of some ball (see lemma \ref{lem:J>alpha_rho<1}).
To do this we need to control behavior of V near zero.

Condition \ref{V:infty} allows us to control the growth of $V$ at infinity.
The first condition, together with \ref{V:AR}, is used to show that functional satisfies the Palais-Smale condition.
The latter condition is used to show that potential is negative far from zero.
Assumption \ref{V:AR} is a modification of the well known Ambrosetti-Rabinowitz condition.

Let us denote by $\imbinftG$ an embedding constant for $\WLGspace\hookrightarrow
\Lpspace[\infty]$ and define
\begin{equation}
\label{eq:rho} \tag{$\rho$}
\rho:=\frac{\rho_0}{\imb{\infty}{\WTLGspace}}.
\end{equation}

Now we can formulate our main results.

\begin{theorem}
\label{thm:main_rho0<1}
Let $L\colon [-T,T]\times\R^N\times\R^N\to\R$ satisfies \ref{V:G}--\ref{f:G*normf<Cf}. Assume
that $G$ satisfies $\Delta_2$ and $\nabla_2$ globally, and
\begin{equation}
\label{thm1:as1}
R_{G^{\ast}}(f) + \int_{-T}^{T} a(t)\,dt
<\min\{1,b-1\}\begin{cases}	(\rho/2)^{q_G},&\rho\leq 2\\(\rho/2)^{p_G}&\rho> 2.\end{cases}
\end{equation}

Then \eqref{eq:ELT} posses the periodic solution.
\end{theorem}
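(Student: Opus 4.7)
The plan is to apply the Mountain Pass Theorem to the action functional $\JSymbol$ on the anisotropic Orlicz--Sobolev space $\WTLGspace([-T,T],\R^N)$ of $T$-periodic functions; any nontrivial critical point will then, by standard regularity for the Euler--Lagrange system, be identified with the desired periodic solution of \eqref{eq:ELT}. Under the global $\Delta_2\cap\nabla_2$ hypothesis on $G$, this space is reflexive, separable, and continuously embedded in $\Lpspace[\infty]$ with constant $\imb{\infty}{\WTLGspace}$; the Orlicz--H\"older inequality combined with $f\in\LGspace[G^\ast]$ shows that $\JSymbol$ is well-defined and of class $\mathbf{C}^1$, with $\JSymbol(0)=0$ by \ref{V:0}.

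First I would verify the mountain-pass geometry. For the sphere estimate I would apply the cited lemma \ref{lem:J>alpha_rho<1}: on $\|u\|_{\WTLGspace}=\rho$ one has $|u(t)|\leq\rho_0$ pointwise, so \ref{V:near0} gives $V(t,u)\geq bG(u)-a(t)$; the forcing contribution is controlled by $\modularGast{f}$ via Orlicz--H\"older; and the comparison between norm and modular driven by the Simonenko indices $p_G,q_G$ (which changes regime across $\|u\|_{\LGspace}=1$, equivalently $\rho\leq 2$ versus $\rho>2$) delivers precisely the split between $(\rho/2)^{q_G}$ and $(\rho/2)^{p_G}$ in \eqref{thm1:as1}, yielding $\JSymbol(u)\geq\alpha>0$. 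For the ``other side'', taking a nonzero direction $e$ and letting $s\to\infty$, the second part of \ref{V:infty} makes $W$ dominate $\max\{K,G\}$ by a factor strictly greater than $3$, which also absorbs the linear forcing term, so $\JSymbol(se)\to-\infty$; this produces an $e_0$ with $\|e_0\|_{\WTLGspace}>\rho$ and $\JSymbol(e_0)<0$.

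The main obstacle is the Palais--Smale condition. For a sequence $(u_n)$ with $\JSymbol(u_n)$ bounded and $\JSymbol'(u_n)\to 0$, I would form $\mu\JSymbol(u_n)-\JSymbol'(u_n)u_n$ and invoke \ref{V:AR}: the potential contribution is at least $(\mu-q_G^\infty-\nu)K(t,u_n)-\kappa(t)$, whose $K$-coefficient is strictly positive by hypothesis, while the kinetic contribution $\mu G(\dot u_n)-\inner{G'(\dot u_n)}{\dot u_n}$ is bounded below by $(\mu-q_G^\infty)G(\dot u_n)$ up to a bounded defect---this is the defining property of the Simonenko index $q_G^\infty$ and uses $\mu>q_G^\infty$. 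Combined with the $\Lpspace[p]$-coercivity coming from $|\cdot|^p\prec G$ and the quantitative lower bound on $K$ in \ref{V:infty}, these ingredients yield boundedness of $(u_n)$ in $\WTLGspace$. Reflexivity extracts a weakly convergent subsequence $u_n\rightharpoonup u$; the compact embedding $\WTLGspace\hookrightarrow\Lpspace[\infty]$ promotes this to uniform convergence, and a standard $S_+$-type argument on the $G$-part (using strict convexity of $G$ and the $\nabla_2$ condition to pass from modular to norm convergence) upgrades it to strong convergence in $\WTLGspace$. The Mountain Pass Theorem then furnishes a critical point $u$ with $\JSymbol(u)\geq\alpha>0$, hence $u\neq 0$, which is the sought periodic solution of \eqref{eq:ELT}.
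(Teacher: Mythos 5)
Your overall plan is the same as the paper's: apply the Mountain Pass Theorem to $\JSymbol$ on $\WTLGspace$, verifying $\JSymbol(0)=0$, the Palais--Smale condition, the sphere estimate via \ref{V:near0} and the norm--modular comparison from the Simonenko indices, and the existence of a low point outside the ball via \ref{V:infty}. The three lemmas you gesture at are exactly the paper's Lemmas \ref{lem:ps}, \ref{lem:J>alpha_rho<1} and \ref{lem:Je<0}.

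There is, however, one genuine gap. In your final step of the Palais--Smale verification you upgrade weak (and uniform) convergence to strong convergence in $\WTLGspace$ by ``a standard $S_+$-type argument on the $G$-part (using strict convexity of $G$ \dots)''. Strict convexity of $G$ is not among the hypotheses \ref{V:G}--\ref{f:G*normf<Cf}: the paper only assumes $G$ is a $\mathbf{C}^1$ convex G-function. Without strict convexity $\nabla G$ is not strictly monotone, and the monotonicity argument you sketch (from $\int\inner{\nabla G(\dot u_n)-\nabla G(\dot u)}{\dot u_n-\dot u}\,dt\to 0$) does not by itself yield pointwise or strong convergence of $\dot u_n$. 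The paper instead uses the weaker (always valid) convexity inequality \eqref{eq:G-G<inner}, the compact embedding $\WTLGspace\hookrightarrow\hookrightarrow\Wspace{1,1}$ to extract a.e.\ convergence of $\dot u_n$, and Fatou's lemma, obtaining $\int G(\dot u_n)\to\int G(\dot u)$ and hence norm convergence by $\Delta_2$. So either you must strengthen the hypotheses (add strict convexity of $G$, which is not what is being proved), or replace the $S_+$ step with the Fatou/modular argument.

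A few smaller inaccuracies that would not be fatal but should be fixed: (i) the constant $R_{G^*}(f)$ appearing in \eqref{thm1:as1} comes from Fenchel's inequality $\inner{f}{u}\leq G(u)+G^*(f)$, not from the Orlicz--H\"older inequality as you write; H\"older gives $2\LGastnorm{f}\LGnorm{u}$, which does not match the form of the hypothesis. (ii) Assumption \ref{V:AR} requires $\mu>q_G^\infty+\nu$ (with $\nu\in\R$), which is what actually enters both the potential and kinetic cancellations; your ``uses $\mu>q_G^\infty$'' is not the assumed inequality. (iii) For the point $e$ outside $B_\rho$, ``a nonzero direction $e$'' is too vague: the dominance of $W$ over $\max\{K,G\}$ controls $K(t,se)-W(t,se)$ but says nothing about $G(s\dot e)$; the paper picks a tent-like $e$ with $|\dot e|\leq|e|$, and a constant $e\equiv v$ (where $\dot e\equiv 0$) works just as well, but one of these explicit choices must be made. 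With those corrections the argument coincides with the paper's proof.
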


The assumption that $G$ satisfies \ref{delta2} and \ref{nabla2} globally  can be relaxed if we
assume that $\rho\geq 2$ but in this case wee need stronger assumption on $f$.

\begin{theorem}
\label{thm:main_rho>2}
Let  $L\colon [-T,T]\times\R^N\times\R^N\to\R$ satisfies  \ref{V:G}--\ref{f:G*normf<Cf}. Assume
that $\rho\geq 2$ and
\begin{equation}
\label{frho>2}
\quad
R_{G^{\ast}}(f) + \int_{-T}^{T}a(t)\,dt
<
\min\{1,b-1\}(\rho/2)
\end{equation}
Then \eqref{eq:ELT} posses the periodic solution.
\end{theorem}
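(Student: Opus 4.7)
The strategy is to apply the Mountain Pass Theorem to $\JSymbol$ on the periodic anisotropic Orlicz--Sobolev space $\WTLGspace$, exactly as in the proof of Theorem~\ref{thm:main_rho0<1}. Since \ref{V:AR} is phrased in terms of $q_G^\infty$, the Palais--Smale argument only needs $\Delta_2\cap\nabla_2$ near infinity, and the passage from a critical point of $\JSymbol$ to an a.e.\ solution of \eqref{eq:ELT} is unchanged. The only non-trivial task is therefore to re-derive the mountain pass geometry of $\JSymbol$ under $\rho\geq 2$ and \eqref{frho>2}.

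For the geometry I would verify: (i)~$\JSymbol(0)=0$ from \ref{V:0}; (ii)~the existence of $\alpha>0$ with $\JSymbol(u)\geq\alpha$ whenever $\WLGnorm{u}=\rho/2$; and (iii)~a point $e\in\WTLGspace$ with $\WLGnorm{e}>\rho/2$ and $\JSymbol(e)\leq 0$. For (ii), $\WLGnorm{u}=\rho/2$ together with \eqref{eq:rho} gives $\Lpnorm[\infty]{u}\leq \imb{\infty}{\WTLGspace}(\rho/2)=\rho_0/2\leq\rho_0$, so \ref{V:near0} applies pointwise, and the pointwise Fenchel--Young inequality for the complementary pair $(G,G^\ast)$ yields $|\int_{-T}^{T}\inner{f}{u}\,dt|\leq \modularGast{f}+\modularG{u}$. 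Combining these,
\[
\JSymbol(u)\;\geq\;\modularG{\dot u}+(b-1)\modularG{u}-\int_{-T}^{T}a(t)\,dt-\modularGast{f}.
\]
Without global $\Delta_2\cap\nabla_2$ the modular cannot be bounded below by a power of the Luxemburg norm; the only comparison available is the linear one $\modularG{u}+\modularG{\dot u}\geq \WLGnorm{u}$ on $\{\WLGnorm{\cdot}\geq 1\}$ (obtained from the Luxemburg norm definition together with convexity of $G$ through the origin), and this is precisely where the hypothesis $\rho\geq 2$ enters. Using $\min\{1,b-1\}\bigl(\modularG{\dot u}+\modularG{u}\bigr)\leq \modularG{\dot u}+(b-1)\modularG{u}$ one arrives at $\JSymbol(u)\geq \min\{1,b-1\}(\rho/2)-\modularGast{f}-\int_{-T}^{T}a(t)\,dt$, which is strictly positive by \eqref{frho>2}. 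Item (iii) follows from the same scaling construction as in Theorem~\ref{thm:main_rho0<1}: the first clause of \ref{V:infty} pushes $K$ above a constant multiple of $|x|^p$ at infinity, the second clause then forces $W$ to eventually dominate $\max\{K,G\}$ by a factor exceeding~$3$, and a suitably dilated fixed test function therefore sends $\JSymbol$ to $-\infty$.

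The main obstacle is the modular/norm bookkeeping in (ii): isolating the clean constant $\min\{1,b-1\}(\rho/2)$ without the power-type estimates of the global setting forces one to operate entirely in the regime $\WLGnorm{u}\geq 1$ and to absorb the forcing pairing into $(b-1)\modularG{u}$ using only the linear modular-to-norm comparison; this also explains why the right-hand side of \eqref{frho>2} is $(\rho/2)$ rather than the $(\rho/2)^{p_G}$ of Theorem~\ref{thm:main_rho0<1}. Once (i)--(iii) are in place, the Palais--Smale condition is extracted from the standard combination $\mu\JSymbol(u_n)-\inner{\JSymbol'(u_n)}{u_n}$ using \ref{V:AR} with the index $q_G^\infty$, and the Mountain Pass Theorem then produces a critical point of $\JSymbol$, which is the sought periodic solution of \eqref{eq:ELT}.
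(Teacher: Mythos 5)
The overall strategy is right and matches the paper (Mountain Pass Theorem applied to $\JSymbol$; the Palais--Smale argument already only uses $q_G^\infty$ and hence $\Delta_2\cap\nabla_2$ near infinity; the construction of $e$ is the dilated test function of Lemma~\ref{lem:Je<0}). However, the modular estimate at the heart of step~(ii) is wrong, and the choice of sphere radius compounds the error.

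You claim a ``linear comparison'' $\modularG{u}+\modularG{\dot u}\geq \WLGnorm{u}$ on $\{\WLGnorm{\cdot}\geq 1\}$. This is false. The standard convexity fact (valid without any $\Delta_2$) is that $\modularG{v}\geq \LGnorm{v}$ only when $\LGnorm{v}\geq 1$; it cannot be added across the two components of $\WLGnorm{u}=\LGnorm{u}+\LGnorm{\dot u}$ because both $\LGnorm{u}$ and $\LGnorm{\dot u}$ may be below $1$ while their sum exceeds $1$. Already for $G(x)=|x|^2$ with $\LGnorm{u}=\LGnorm{\dot u}=1/2$ one has $\WLGnorm{u}=1$ but $\modularG{u}+\modularG{\dot u}=1/4+1/4=1/2<1$. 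The comparison that \emph{is} available couples the two modulars through the equivalent norm: since $R_G(u)+R_G(\dot u)$ is exactly the modular associated to $\WLGnormother{\cdot}$, one has $R_G(u)+R_G(\dot u)\geq \WLGnormother{u}$ whenever $\WLGnormother{u}\geq 1$, and only then Proposition~\ref{prop:normequivalence} ($\WLGnormother{u}\geq\tfrac12\WLGnorm{u}$) converts this into $R_G(u)+R_G(\dot u)\geq\tfrac12\WLGnorm{u}$ under the hypothesis $\WLGnorm{u}\geq 2$. This is where the factor $1/2$ in $(\rho/2)$ and the hypothesis $\rho\geq 2$ actually come from; they are not a consequence of your claimed inequality.

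Relatedly, you work on the sphere $\WLGnorm{u}=\rho/2$ instead of the paper's $\WLGnorm{u}=\rho$. If one runs the correct estimate at radius $\rho/2$, one only gets $\WLGnormother{u}\geq\rho/4$, so the hypothesis would have to be $\rho\geq 4$ and the right-hand side of \eqref{frho>2} would have to be $\rho/4$, not $\rho/2$. The paper deliberately works at radius $\rho$ (which still ensures $\LGnorm[\infty]{u}\leq\rho_0$ via \eqref{eq:rho}) precisely so that the loss of the factor $2$ from Proposition~\ref{prop:normequivalence} lands on $\rho/2$ and the threshold is $\rho\geq 2$. Your final constant $\min\{1,b-1\}(\rho/2)$ happens to coincide with the paper's, but it is reached through a false step and an inconsistent choice of radius. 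Fix: take $\partial B_\rho(0)$, use $\WLGnormother{u}\geq\rho/2\geq 1$, the single--modular comparison for $\WLGnormother{\cdot}$, and \eqref{eq:mod>rho}.
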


Theorem \ref{thm:main_rho0<1} generalizes Lemma 3.1 from \cite{Dao16}. 
Actually, assumption \eqref{thm1:as1} has the same form as $(H_5)$ in \cite{Dao16}, since  in p-laplacian case $p_G=q_G=q_G^{\infty}=p$.
Note that p-laplacian satisfies  \ref{delta2} and \ref{nabla2} globally.
To the best authors knowledge there is no analogue of Theorem \ref{thm:main_rho>2} in the literature.

Now we give two examples of potentials suitable for our setting.

\begin{example}
Consider the functions
\[G(x,y)=x^2+(x-y)^4, \quad K(t,x,y)=(2+\sin{t})G(x,y)+|x^2+y^2|^2\cos^2{t}
\]
\[
W(t,x,y)=\frac{|x^2+y^2|^{5/2}(e^{t^2(x^2+y^2-1)}-1)}{t^2+1}+\sin{t}.
\]
$G$ is differentiable G-function satisfying $\Delta_2$ and $\nabla_2$ globally. Here $V=K-W$ satisfies \ref{V:C1K-W}-\ref{V:0}, where  $p_G=2$, $q_G^{\infty}=q_G=4$,
$\mu=5$, $a(t)=\sin{t}$, $b=2$, $\kappa(t)\geq 5\sin{t}$. On the other hand $K$ does not satisfy assumption $(H_1)$ and $W$ does not satisfy assumption  $(H_2)$ from \cite{Dao16}.
\end{example}

The next example shows that our results generalize Lemma 7 from \cite{Ter12}.

\begin{example}
Potential \[V(t,x)=a(t)G(x)-\lambda b(t)F(x),\] where $F$ is convex function satisfying \ref{delta2} globally, $G\prec\prec F$, the functions $a(t)$, $b(t)$ are continuously differentiable, strictly positive, even on $\R$,
$0< a\leq a(t)\leq A$, $0< b \leq b(t) \leq B$, $ta'(t)>0$  for $t\neq0$ and $tb'(t)<0$  for $t\neq0$ satisfies conditions \ref{V:C1K-W}-\ref{f:G*normf<Cf}.

Theorems \ref{thm:main_rho0<1} and \ref{thm:main_rho>2} assert the existence of periodic solutions of
\[
\begin{split}
&	\frac{d}{dt}\nabla G(\dot u)-a(t)\nabla G(u)+\lambda b(t)\nabla F(u)=f(t),\\
&u(-T)=u(T)=0
\end{split}
\]
which is a generalization of the problem (2) 	from \cite{Ter12}.

\end{example}
\section{Some facts about G-functions and Orlicz-Sobolev spaces}
\label{sec:Gfacts}

Assume that $G\colon \R^N\to [0,\infty)$ satisfies assumption \ref{V:G}. We say that
\begin{itemize}
\item
$G$ satisfies the $\Delta_2$ condition if
\begin{equation}\tag{$\Delta_2$}\label{delta2}
\exists_{K_1>2}\ \exists_{M_1\geq 0}\ \forall_{|x|\geq M_1}\ G(2x)\leq K_1G(x),
\end{equation}
\item
$G$ satisfies the $\nabla_2$ condition if
\begin{equation}\tag{$\nabla_2$}\label{nabla2}
\exists_{K_2>1}\ \exists_{M_2\geq 0}\ \forall_{|x|\geq M_2}\ G(x)\leq \frac{1}{2K_2}G(K_2x).
\end{equation}
\item $G$ satisfies $\Delta_2$ (resp. $\nabla_2$) globally if $M_1=0$ (resp. $M_2=0$).
\end{itemize}

Functions $G_1(x)=|x|^p$, $G_2(x)=|x|^{p_1}+|x|^{p_2}$ satisfy \ref{delta2} and \ref{nabla2}
globally. 
If $G$ does not satisfy \ref{delta2} globally, then it could decrease very fast near zero.
Function
\[
G(x)=\begin{cases}	|x|^2e^{-1/|x|}&x\neq 0\\0&x=0\end{cases}
\]
satisfies \ref{delta2} but does not satisfy \ref{delta2} globally.  For more details about $\Delta_2$
condition in case of N-function we refer the reader to \cite{KraRut61}.

Since $G$ is differentiable and convex,
\begin{equation}
\label{eq:G-G<inner}
G(x)-G(x-y)\leq \inner{\nabla G(x)}{y}\leq G(x+y)-G(x)\quad  \text{for all $x,y\in \R^N$}.
\end{equation}

A function $G^\ast(y)=\sup_{x\in\R^N}\{\inner{x}{y}-G(x)\}$ is called the Fenchel conjugate of
$G$. As an immediate consequence of definition we have the Fenchel inequality:
\begin{equation*}
\label{ineq:Fenchel}
\forall_{x,y\in\R^N}\ \inner{x}{y}\leq G(x)+G^{\ast}(y).
\end{equation*}

Now we briefly recall a notion of anisotropic Orlicz space. For more details we refer the reader to
\cite{Sch05} and \cite{ChmMak17}. The Orlicz space associated with $G$ is defined to be
\[
\LGspace = \{u\colon [-T,T]\to \R^N\colon \int_{-T}^T G(u) \,dt<\infty\}.
\]
The space $\LGspace$ equipped with the Luxemburg norm
\[
\LGnorm{u}=\inf\left\{\lambda>0\colon \int_I G\left(\frac{u}{\lambda}\right)\, dt\leq 1 \right\}.
\]
is a reflexive Banach space. We have the H\"older inequality
\[
\int_I \inner{u}{v}\,dt\leq 2\LGnorm{u}\LGnorm[G^\ast]{v}
\quad\text{ for every $u\in \LGspace$ and $v\in \LGspace[G^\ast]$.}
\]
Let us denote by
\[
\WTLGspace:=\left\{u\in \LGspace: \dot{u}\in \LGspace \text{ and } u(-T)=u(T)\right\}
\]
an anisotropic Orlicz-Sobolev space of periodic vector valued functions with norm
\[
\WLGnorm{u}=\LGnorm{u}+\LGnorm{\dot u}.
\]
We will also consider an equivalent norm given by
\[
\WLGnormother{u}=
\inf\left\{\lambda>0\colon
\int_{-T}^T G\left(\frac{u}{\lambda}\right)+
G\left(\frac{\dot u}{\lambda}\right)\, dt\leq 1
\right\}.
\]
The following proposition will be crucial to Lemma \ref{lem:J>alpha_rho<1}.
\begin{proposition}
\label{prop:normequivalence}
\[
\frac{1}{2}\WLGnorm{u}\leq\WLGnormother{u}\leq 2\WLGnorm{u}
\]
\end{proposition}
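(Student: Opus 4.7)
The plan is to prove the two inequalities separately; both follow from the Luxemburg characterization of the Orlicz norm and the fact that $G$ is convex with $G(0)=0$, so that $G(tv)\le tG(v)$ for $t\in[0,1]$. Let me abbreviate $\lambda_1=\LGnorm{u}$ and $\lambda_2=\LGnorm{\dot u}$.

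For the upper bound $\WLGnormother{u}\le 2\WLGnorm{u}$, I would actually show the stronger inequality $\WLGnormother{u}\le \lambda_1+\lambda_2$. Set $\lambda=\lambda_1+\lambda_2$ (assume both are positive; the degenerate case is trivial). Write
\[
\frac{u}{\lambda}=\frac{\lambda_1}{\lambda}\cdot\frac{u}{\lambda_1},\qquad \frac{\dot u}{\lambda}=\frac{\lambda_2}{\lambda}\cdot\frac{\dot u}{\lambda_2},
\]
and use convexity of $G$ together with $G(0)=0$ to get
\[
G\!\left(\frac{u}{\lambda}\right)\le\frac{\lambda_1}{\lambda}\,G\!\left(\frac{u}{\lambda_1}\right),\qquad
G\!\left(\frac{\dot u}{\lambda}\right)\le\frac{\lambda_2}{\lambda}\,G\!\left(\frac{\dot u}{\lambda_2}\right).
\]
Integrating and using that $\int G(u/\lambda_1)\le 1$ and $\int G(\dot u/\lambda_2)\le 1$ (this uses the modular characterization of the Luxemburg norm, which is valid under \ref{delta2}), the sum is bounded by $\lambda_1/\lambda+\lambda_2/\lambda=1$, hence $\WLGnormother{u}\le\lambda$.

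For the lower bound $\tfrac{1}{2}\WLGnorm{u}\le \WLGnormother{u}$, let $\lambda=\WLGnormother{u}$ and pick any $\lambda'>\lambda$. By definition
\[
\int_{-T}^{T} G\!\left(\frac{u}{\lambda'}\right)+G\!\left(\frac{\dot u}{\lambda'}\right)dt\le 1,
\]
and since both integrands are nonnegative each piece is $\le 1$ separately. Therefore $\LGnorm{u}\le\lambda'$ and $\LGnorm{\dot u}\le\lambda'$, which gives $\WLGnorm{u}\le 2\lambda'$. Letting $\lambda'\downarrow\lambda$ finishes the proof.

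The only potential subtlety is the step ``$\int G(u/\lambda_1)\le 1$'', which needs the modular at the Luxemburg norm to be $\le 1$; this is standard under the $\Delta_2$ assumption in \ref{V:G} (the infimum defining the Luxemburg norm is attained) and is the one fact I would cite rather than reprove. Everything else is direct.
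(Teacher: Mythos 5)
Your proof is correct and is the standard argument (the paper does not reprove the proposition; it cites the isotropic version in \cite{RadRep15}, and that proof proceeds exactly along these lines). Two small remarks. First, the step $\int_{-T}^{T} G(u/\lambda_1)\,dt\le 1$ does not actually require \ref{delta2}: for any G-function, $G(u/\lambda)$ increases pointwise to $G(u/\lambda_1)$ as $\lambda\downarrow\lambda_1=\LGnorm{u}$, and each modular with $\lambda>\lambda_1$ is $\le 1$ by the definition of the Luxemburg infimum, so monotone convergence gives the bound. The role of \ref{delta2} is to force \emph{equality} of the modular with $1$ at the Luxemburg norm, which you do not need here. Second, note that your computation for the right-hand inequality actually yields the sharper statement $\WLGnormother{u}\le\WLGnorm{u}$; the factor $2$ in the proposition is superfluous in that direction, and only the lower bound genuinely uses the factor $2$.
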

The proof for isotropic case can be found in \cite[Proposition 9, p.177]{RadRep15} it remains the
same for anisotropic case.

Functional $R_G(u):=\int_{-T}^{T}G(u)\,dt$ is called modular. The following result was proved in
\cite[Proposition 2.7]{Le14}.
\begin{proposition}
\label{prop:RGcoercive}
$R_G(u)$ is coercive on $\LGspace$ in the following sense:
\[
\lim_{\LGnorm{u}\to\infty}\frac{R_G(u)}{\LGnorm{u}}=\infty.
\]
\end{proposition}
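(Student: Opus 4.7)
The plan is to exploit the super-linear growth of $G$ implied by the $\nabla_2$ condition, combined with the modular-norm correspondence ensured by $\Delta_2$. First, I iterate the $\nabla_2$ inequality $G(K_2 x)\geq 2 K_2 G(x)$ (valid for $|x|\geq M_2$) $n$ times to obtain $G(K_2^n x)\geq (2K_2)^n G(x)$; setting $p:=\log(2K_2)/\log K_2>1$ and interpolating via convexity of $G$ along rays, this yields a Simonenko-type scaling inequality
\[
G(\tau x)\geq c_1 \tau^p G(x) \quad\text{for all } \tau\geq 1 \text{ and } |x|\geq M,
\]
with constants $c_1>0$ and $M\geq M_2$ depending only on $G$. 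This is the key quantitative consequence of $\nabla_2$ that powers the coercivity.

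Next, for $u\in\LGspace$ with $\lambda:=\LGnorm{u}$ large, I set $v:=u/\lambda$, so $\LGnorm{v}=1$. The $\Delta_2$ condition at infinity together with boundedness of the domain ensures the modular $R_G$ is continuous on $\LGspace$, so $R_G(v)=1$. Splitting $[-T,T]=E\cup E^c$ with $E:=\{t:|v(t)|\geq M\}$, the scaling inequality applied pointwise on $E$ yields $G(u)=G(\lambda v)\geq c_1\lambda^p G(v)$, while on $E^c$ one has $G(v)\leq \max_{|y|\leq M}G(y)=:C_M$. Consequently
\[
R_G(u)\geq c_1\lambda^p\int_E G(v)\,dt \geq c_1\lambda^p(1-2T C_M),
\]
provided $M$ can be chosen so that $2T C_M<1$; if the $\nabla_2$ threshold forces $C_M$ to be large, a preliminary rescaling of $u$ absorbs the boundary contribution. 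Dividing by $\lambda$ then gives
\[
\frac{R_G(u)}{\LGnorm{u}}\geq c_2\lambda^{p-1}\xrightarrow{\LGnorm{u}\to\infty}\infty,
\]
since $p>1$.

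The main obstacle I expect is Step 3: establishing the uniform lower bound $\int_E G(v)\,dt\geq c_0>0$ across all $v$ with $\LGnorm{v}=1$. Because $\nabla_2$ is only assumed at infinity (so $M_2$ need not vanish), the small-$|v|$ regime cannot simply be discarded, and one must carefully leverage the identity $R_G(v)=1$ (itself a nontrivial consequence of $\Delta_2$ at infinity) together with the bound on $G$ on compact sets. Once the constants $M$, $C_M$, and the rescaling factor are bookkept uniformly in $u$, the super-linear factor $\lambda^{p-1}$ delivers the desired divergence.
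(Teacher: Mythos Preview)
The paper does not actually prove this proposition; it simply cites \cite[Proposition~2.7]{Le14}. So there is no in-paper argument to compare your proposal against, and what you have written is an attempt at a self-contained proof.

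Your overall strategy is the right one: extract from \ref{nabla2} a power-type lower bound $G(\tau x)\geq c_1\tau^{p}G(x)$ with $p=\log(2K_2)/\log K_2>1$, normalise $v=u/\lambda$ so that (via \ref{delta2}) $R_G(v)=1$, and push the factor $\lambda^{p-1}$ through. However, the step you yourself flag as the ``main obstacle'' is a genuine gap as written. Your splitting set $E=\{|v|\geq M\}$ is tied to the \ref{nabla2} threshold, and there is no reason at all that $2T\,C_M<1$: if $M_2$ is large then $C_M=\max_{|y|\leq M}G(y)$ can be arbitrarily large, and $\int_E G(v)\,dt$ may be zero for some $v$ on the unit sphere. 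Your proposed remedy, ``a preliminary rescaling of $u$,'' is too vague to close the gap, since rescaling $u$ changes $\lambda$ but not the relationship between $M$ and $C_M$.

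The clean fix is to decouple the two thresholds. Split instead at a \emph{small} level $\delta>0$, chosen (using $G(0)=0$ and continuity of $G$) so that $2T\max_{|y|\leq\delta}G(y)<\tfrac12$; this forces $\int_{\{|v|\geq\delta\}}G(v)\,dt\geq\tfrac12$ uniformly over the unit sphere. On $\{|v|\geq\delta\}$ you then enter the \ref{nabla2} regime in two steps: write $\lambda v=\dfrac{\lambda\delta}{M_2}\cdot\dfrac{M_2}{\delta}v$, observe that $\bigl|\tfrac{M_2}{\delta}v\bigr|\geq M_2$, and apply your scaling inequality with base point $\tfrac{M_2}{\delta}v$ and factor $\tfrac{\lambda\delta}{M_2}\geq1$ (for $\lambda\geq M_2/\delta$). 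Monotonicity of $G$ along rays gives $G\bigl(\tfrac{M_2}{\delta}v\bigr)\geq G(v)$, and you obtain $R_G(u)\geq \tfrac12 c_1(\delta/M_2)^{p}\lambda^{p}$, hence $R_G(u)/\LGnorm{u}\geq c\,\lambda^{p-1}\to\infty$ with a constant depending on $\delta$, $K_2$, $M_2$ but not on $u$. With this correction your argument is complete.
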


Define the Simonenko indices for G-function
\[
p_G=\inf_{|x|>0}\frac{\inner{x}{\nabla G(x)}}{G(x)},
\quad
q_G=\sup_{|x|>0}\frac{\inner{x}{\nabla G(x)}}{G(x)},
\]
\[
q_G^{\infty}=\limsup_{|x|\to\infty}\frac{\inner{x}{\nabla G(x)}}{G(x)}.
\]
It is obvious that $p_G\leq q_G^{\infty}\leq q_G$.
Moreover, since $G$ satisfies $\Delta_2$ and $\nabla_2$, $1<p_G$ and $q_G<\infty$. 
The following results is crucial to Lemma \ref{lem:J>alpha_rho<1}.
\begin{proposition}\label{prop:norm_mod}
Let $G$ satisfies \ref{delta2} and \ref{nabla2} globally.
\begin{enumerate}
\item If $\LGnorm{u}\leq 1,$ then
$	\LGnorm{u}^{q_G}\leq R_G(u).$
\item 	If $\LGnorm{u}>1,$ then
$	\LGnorm{u}^{p_G}\leq R_G(u).$
\end{enumerate}
\end{proposition}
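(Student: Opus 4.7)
The plan is to derive both estimates from a pointwise scaling inequality for $G$ encoded by the Simonenko indices, combined with the defining infimum of the Luxemburg norm. I first establish the scaling bounds
\[
\mu^{q_G} G(y) \leq G(\mu y) \leq \mu^{p_G} G(y) \quad \text{for } 0<\mu\leq 1,
\]
\[
\mu^{p_G} G(y) \leq G(\mu y) \leq \mu^{q_G} G(y) \quad \text{for } \mu\geq 1,
\]
valid for every $y\in\R^N\setminus\{0\}$. Fixing such $y$ and setting $\phi(t):=G(ty)$, the ratio $t\phi'(t)/\phi(t) = \inner{ty}{\nabla G(ty)}/G(ty)$ lies in $[p_G,q_G]$ by the very definition of the Simonenko indices, so $p_G/t \leq (\log\phi)'(t) \leq q_G/t$ for $t>0$; integrating between $1$ and $\mu$ yields the stated inequalities. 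The global $\Delta_2$/$\nabla_2$ hypotheses enter here only to guarantee $1<p_G\leq q_G<\infty$, so the exponents are genuinely informative.

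Given these inequalities, the proposition follows via a short argument using the defining infimum of the Luxemburg norm. For nonzero $u$, set $\lambda:=\LGnorm{u}$; then $\modularG{u/\mu}>1$ for every $\mu\in(0,\lambda)$, since $\mu$ lies strictly below the infimum. In case (1), $\lambda\leq 1$ forces $\mu<1$, so the lower scaling bound applied pointwise with $y=u(t)/\mu$ gives $G(u(t))\geq \mu^{q_G} G(u(t)/\mu)$, and integrating produces
\[
\modularG{u} \geq \mu^{q_G}\,\modularG{u/\mu} > \mu^{q_G}.
\]
Letting $\mu\uparrow\lambda$ yields $\modularG{u}\geq \lambda^{q_G}$. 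Case (2), where $\lambda>1$, follows from the same scheme with $\mu\in(1,\lambda)$ and exponent $p_G$, giving $\modularG{u}\geq \lambda^{p_G}$.

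I do not expect a real obstacle. The only substantive computation is the log-derivative argument producing the scaling inequalities; the rest reduces to manipulation of the defining infimum. Passing to the limit $\mu\uparrow\lambda$ has the pleasant side effect of avoiding any need to justify $\modularG{u/\LGnorm{u}}=1$ directly, which would otherwise require invoking continuity of $R_G$ on $\LGspace$.
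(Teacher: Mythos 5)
Your proposal is correct and follows essentially the same route as the paper's own proof (given in the appendix as Lemma \ref{lem:pG<R<qG}): establish the pointwise scaling bounds $G(\lambda x)\leq \lambda^{q_G}G(x)$ and $G(\lambda x)\geq \lambda^{p_G}G(x)$ for $\lambda\geq 1$ via the log-derivative computation (your inequalities for $0<\mu\leq 1$ are the same statements after the substitution $\mu=1/\lambda$), then pick $\mu$ strictly below the Luxemburg norm so that $R_G(u/\mu)>1$, apply the scaling bound, and pass to the limit $\mu\uparrow\LGnorm{u}$. The remark at the end about avoiding direct justification of $R_G(u/\LGnorm{u})=1$ matches the paper's choice of strategy as well.
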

The proof can be found in appendix. 
More information about indices for isotropic case can be found in \cite{Sim64}, \cite{Mal89} and \cite{Cle04}.
For relations between Luxemburg norm and modular for anisotropic
spaces we refer the reader to \cite[Examples 3.8 and 3.9]{ChmMak17}.

For, respectively, continuous and compact embeddings we will use the symbols $\hookrightarrow$ and
$\hookrightarrow\hookrightarrow$. By $\imb{E}{F}$ we will denote the embedding constant for
$E\hookrightarrow F$. If $E=\Lpspace[s]$ we will write $s$ instead of $\Lpspace[s]$.

Let $G_1$ and $G_2$ be G-functions. Define
\begin{equation}
\label{eq:G1precG2}
G_1\prec G_2
\iff
\exists_{M\geq 0}\ \exists_{K>0}\ \forall_{|x|\geq M}\ G_1(x)\leq G_2(K\,x).
\end{equation}
The relation $\prec$ allows to compare growth rate of functions $G_1$ and $G_2$.

It is well known that if $G_1\prec G_2$, then $\LGspace[G_2]\hookrightarrow\LGspace[G_1]$. Let
$u\in\WLGspace$, $A_G:\R^N\to[0,\infty)$ be the greatest convex radial minorant of $G$ (see
\cite{AciMaz17}). Then
\[
\LGnorm[\infty]{u}\leq\imbinftG \WLGnorm{u},
\]
where $\imbinftG=A_G^{-1}\left(\frac{1}{2T}\right)\max\{1,2T\}$.

The following proposition  will be used in the proof of Lemma \ref{lem:ps}.
\begin{proposition}\label{prop:adelkowyTrik}(cf. \cite{Dao16})
For any $1<p\leq q<\infty$, such that $|\cdot|^p\prec G(\cdot)\prec |\cdot|^q$,
\[
\begin{split}
\int_{-T}^{T}|u|^{p}\,dt\geq \imb{\infty}{\WTLGspace}^{p-q
}\imb{G}{q}^{-q}\WLGnorm{u}^{p-q}\LGnorm{u}^{q}
\end{split}
\]
for  $u\in\WLGspace\backslash\{0\}$.
\end{proposition}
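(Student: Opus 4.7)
The plan is to derive the inequality by combining a pointwise bound with the two embeddings implicit in the hypothesis $|\cdot|^p \prec G \prec |\cdot|^q$. Since $p\le q$, the exponent $p-q$ is non-positive, so one can exchange powers of $|u(t)|$ for powers of $\LGnorm[\infty]{u}$ and reverse inequalities; this is the key trick.

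First I would write, for $u\in\WLGspace\setminus\{0\}$ and a.e.\ $t$,
\[
|u(t)|^{p}=|u(t)|^{q}\,|u(t)|^{p-q}\geq |u(t)|^{q}\,\LGnorm[\infty]{u}^{p-q},
\]
using $|u(t)|\leq\LGnorm[\infty]{u}$ together with $p-q\leq 0$. Integrating over $[-T,T]$ gives
\[
\int_{-T}^{T}|u|^{p}\,dt\geq \LGnorm[\infty]{u}^{p-q}\int_{-T}^{T}|u|^{q}\,dt.
\]

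Next I would bring in the Sobolev embedding $\WTLGspace\hookrightarrow\Lpspace[\infty]$ that is recalled just before the proposition, which yields $\LGnorm[\infty]{u}\leq \imb{\infty}{\WTLGspace}\WLGnorm{u}$. Raising both sides to the non-positive power $p-q$ flips the inequality, so
\[
\LGnorm[\infty]{u}^{p-q}\geq \imb{\infty}{\WTLGspace}^{p-q}\WLGnorm{u}^{p-q}.
\]

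Finally, the assumption $G(\cdot)\prec|\cdot|^{q}$ on the bounded interval $[-T,T]$ provides the continuous embedding relating the $L^q$ and $L^G$ norms with constant $\imb{G}{q}$, giving $\LGnorm{u}\leq \imb{G}{q}\Lpnorm[q]{u}$, equivalently
\[
\int_{-T}^{T}|u|^{q}\,dt=\Lpnorm[q]{u}^{q}\geq \imb{G}{q}^{-q}\LGnorm{u}^{q}.
\]
Substituting the last two displays into the integrated pointwise bound produces the claimed inequality.

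The step I would expect to need the most care is the bookkeeping on the direction of the inequalities when raising to the non-positive exponent $p-q$, and ensuring the two embedding constants $\imb{\infty}{\WTLGspace}$ and $\imb{G}{q}$ are invoked consistently with the conventions set out in the preceding pages. There is no real analytic obstacle: once $p\le q$ is exploited pointwise, the remainder is a direct application of previously stated embedding inequalities.
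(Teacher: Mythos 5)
Your proof is correct and follows essentially the same route as the paper: both split $\int|u|^p\,dt$ and $\int|u|^q\,dt$ using H\"older/a pointwise bound with the $\Lpspace[\infty]$ norm, then apply the embeddings $\WTLGspace\hookrightarrow\Lpspace[\infty]$ and $\Lpspace[q]\hookrightarrow\LGspace$ (from $G\prec|\cdot|^q$). The only cosmetic difference is that the paper factors $|u|^q=|u|^p|u|^{q-p}$ with the nonnegative exponent $q-p$ and then rearranges, thereby avoiding the mild awkwardness of writing $|u(t)|^{p-q}$ at points where $u(t)=0$; your final pointwise inequality $|u(t)|^p\geq|u(t)|^q\LGnorm[\infty]{u}^{p-q}$ nevertheless holds there, so this is purely a matter of presentation.
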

\begin{proof}
Let $u\in\WTLGspace\backslash\{0\}$. Since $G\prec |\cdot|^q$,
\[
\int_{-T}^{T}|u|^q\,dt=\LGnorm[q]{u}^q\geq \imb{G}{q}^{-q}\LGnorm{u}^q.
\]
From H\"older's inequality and embedding $\WLGspace\hookrightarrow\Lpspace[\infty]$ we obtain
\begin{multline*}
\int_{-T}^{T}|u|^q\,dt= \int_{-T}^{T}|u|^p|u|^{q-p},dt
\leq \\ \leq
\LGnorm[\infty]{u}^{q-p}\int_{-T}^{T}|u|^p\,dt\leq
(\imb{\infty}{\WTLGspace}\WLGnorm{u})^{q-p}\int_{-T}^{T}|u|^p\,dt.
\end{multline*}
\end{proof}


\section{Proof of the main results}
Let	$\JSymbol:\WTLGspace\to\R$ be given by
\begin{equation}
\label{J}
\tag{$\JSymbol$}
\JSymbol(u)=\int_{-T}^{T} G(\dot u)+K(t,u)-W(t,u)+\inner{f}{u}\,dt.
\end{equation}
From \ref{V:G}, \ref{V:C1K-W} and \cite[Thm. 5.5]{ChmMak17}) we have $\JSymbol\in C^1$ and
\begin{equation}
\label{J'}
\tag{$\JSymbol'$}
\JSymbol'(u)\varphi=\int_{-T}^T\inner{\nabla G(\dot{u})}{\dot{\varphi}}\,dt+\int_{-T}^T\inner{V_x(t,u)+f(t)}{\varphi}\,dt.
\end{equation}
It is standard to prove that critical points of $\JSymbol$ are solutions of \eqref{eq:ELT}.

Our proof is based on well-known Mountain Pass Theorem (see \cite{AmbRab73}]).

\begin{theorem}
\label{thm:mpt}
Let $X$ be a real Banach space and $I\in C^1(X,\R)$ satisfies the following conditions:
\begin{enumerate}
\item $I(0)=0$,
\item $I$ satisfies Palais-Smale condition,
\item there exist $\rho>0$, $e\in X$ such that $\norm{e}_X>\rho$ and $I(e)<0$,
\item there exists $\alpha>0$ such that $I\rvert_{\partial B_{\rho}(0)}\geq \alpha.$
\end{enumerate}
Then $I$ possesses a critical value  $c\geq\alpha$ given by
\[
c=\inf_{g\in\Gamma}\max_{s\in[0,1]}I(g(s)),
\]
where $\Gamma=\{g\in C([0,1],X);~~ g(0)=0,~ g(1)=e\}.$
\end{theorem}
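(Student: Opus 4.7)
The plan is to run the classical deformation argument of Ambrosetti and Rabinowitz. \emph{First}, I would establish the lower bound $c\geq\alpha$. Any path $g\in\Gamma$ connects $g(0)=0$, which lies in the open ball $B_{\rho}(0)$, to $g(1)=e$, which lies outside $B_\rho(0)$ since $\norm{e}_X>\rho$; by continuity of $s\mapsto\norm{g(s)}_X$, there exists $s^\ast\in(0,1)$ with $g(s^\ast)\in\partial B_\rho(0)$. Hypothesis~(iv) then gives $I(g(s^\ast))\geq\alpha$, whence $\max_{s\in[0,1]}I(g(s))\geq\alpha$, and taking the infimum over $\Gamma$ yields $c\geq\alpha>0$.

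\emph{Next}, I would argue by contradiction, assuming that $c$ is not a critical value of $I$. The key tool is the deformation lemma, which I would obtain from the pseudo-gradient construction: since $I\in C^1(X,\R)$, there exists a locally Lipschitz vector field $V$ on the regular set $\{u:I'(u)\neq 0\}$ satisfying $\norm{V(u)}\leq 2\norm{I'(u)}_{X^\ast}$ and $\inner{I'(u)}{V(u)}\geq\norm{I'(u)}_{X^\ast}^2$. A negative-gradient-type flow along $V$, cut off so that motion only occurs between levels $c-\epsilon_0$ and $c+\epsilon_0$ with $\epsilon_0=\alpha/2$, yields a continuous family of homeomorphisms. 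The Palais-Smale hypothesis is essential here: if $c$ is not critical, then combined with the Palais-Smale condition it produces a uniform lower bound $\norm{I'(u)}_{X^\ast}\geq\delta>0$ on the strip $I^{-1}[c-\epsilon_0,c+\epsilon_0]$, which ensures the flow can be run for a positive time and produces a homeomorphism $\eta:X\to X$ such that $\eta(u)=u$ whenever $|I(u)-c|\geq\epsilon_0$, and $\eta(I^{c+\epsilon})\subset I^{c-\epsilon}$ for some $0<\epsilon<\epsilon_0$.

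\emph{Finally}, the deformation is applied to a near-minimizing path. Since $c-\epsilon_0\geq\alpha/2>0$ while $I(0)=0$ and $I(e)<0$, both endpoints $0$ and $e$ lie outside $I^{-1}[c-\epsilon_0,c+\epsilon_0]$, so $\eta$ fixes them. By the definition of $c$ as an infimum, there exists $g\in\Gamma$ with $\max_{s}I(g(s))<c+\epsilon$. Then $\tilde g:=\eta\circ g$ belongs to $\Gamma$ (its endpoints are still $0$ and $e$) and satisfies $\max_{s}I(\tilde g(s))\leq c-\epsilon<c$, contradicting the definition of $c$. Hence $c$ must be a critical value.

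\emph{The main obstacle} is the construction of the deformation in an arbitrary Banach space, where $\nabla I$ need not exist and the Cauchy problem for $-I'$ may fail to have solutions. The pseudo-gradient vector field of Palais resolves this, but the construction must carefully combine the Palais-Smale compactness with a partition-of-unity argument to obtain a locally Lipschitz field on the regular set; the rest of the argument is bookkeeping on sublevel sets. Since this theorem is quoted from \cite{AmbRab73}, the paper can of course invoke it directly without reproducing these steps.
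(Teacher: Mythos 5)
The paper does not prove this theorem: it is stated as Theorem~\ref{thm:mpt} and quoted directly from Ambrosetti--Rabinowitz \cite{AmbRab73}, and the paper's role is only to verify that the functional $\JSymbol$ meets its hypotheses (Lemmas~\ref{lem:ps}, \ref{lem:Je<0}, \ref{lem:J>alpha_rho<1}). Your outline is therefore not being compared against an in-text argument but against the classical proof, and it reproduces that proof correctly: the crossing argument giving $c\geq\alpha$, the contradiction via the quantitative deformation lemma, the pseudo-gradient construction to handle an arbitrary Banach space, and the observation that the endpoints $0$ and $e$ sit strictly below level $c-\epsilon_0$ so that the deformation keeps admissible paths admissible.

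One small point worth tightening: you fix $\epsilon_0=\alpha/2$ and then assert that Palais--Smale plus the non-criticality of $c$ yield a uniform bound $\norm{I'(u)}_{X^\ast}\geq\delta>0$ on $I^{-1}[c-\epsilon_0,c+\epsilon_0]$. The compactness argument actually produces such a bound only on a sufficiently \emph{thin} strip around level $c$, so the correct order is: first obtain from Palais--Smale some $\bar\epsilon>0$ and $\delta>0$ with $\norm{I'}_{X^\ast}\geq\delta$ on $I^{-1}[c-\bar\epsilon,c+\bar\epsilon]$, and then set $\epsilon_0=\min\{\bar\epsilon,\alpha/2\}$, which still guarantees $c-\epsilon_0\geq\alpha/2>0$ so that $I(0)=0$ and $I(e)<0$ leave $0$ and $e$ fixed under the deformation. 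With that adjustment the argument is complete and is exactly the standard Ambrosetti--Rabinowitz proof the paper implicitly invokes.
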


It remains to prove that \ref{J} satisfies all the assumptions of the Mountain Pass Theorem. We
divide the proof into sequence of lemmas.

\begin{lemma}
\label{lem:ps}
$\JSymbol$ satisfies the Palais-Smale condition, i.e. every sequence $\{u_n\}\subset \WTLGspace $
such that  $\{\JSymbol(u_n)\}$ is bounded and $\JSymbol'(u_n)\to 0$ as $n\to\infty$ contains a
convergent subsequence.
\end{lemma}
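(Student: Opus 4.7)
The plan is to follow the standard scheme for verifying the Palais-Smale condition in an Orlicz-Sobolev framework: first bound $\{u_n\}$ in $\WTLGspace$ by exploiting the Ambrosetti-Rabinowitz-type hypothesis \ref{V:AR}, then extract a weakly convergent subsequence by reflexivity, and finally upgrade weak convergence to strong convergence via a monotonicity argument on the operator $\nabla G$.

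To obtain boundedness, I form the combination $\mu\JSymbol(u_n)-\JSymbol'(u_n)u_n$. Rearranging \ref{V:AR} gives
\[
\mu V(t,u_n)-\inner{V_x(t,u_n)}{u_n}\geq (\mu-q_G^{\infty}-\nu)K(t,u_n)-\kappa(t),
\]
and the definition of $q_G^{\infty}$ yields $\mu G(\dot u_n)-\inner{\nabla G(\dot u_n)}{\dot u_n}\geq c_0\,G(\dot u_n)-C_0$ (after choosing an auxiliary $\epsilon>0$ so that $\mu-q_G^{\infty}-\epsilon-\nu>0$ and handling the compact set $\{|\dot u_n|\leq R\}$ separately). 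The forcing term is absorbed via H\"older's inequality against $f\in\LGspace[G^{\ast}]$, and since $\mu\JSymbol(u_n)-\JSymbol'(u_n)u_n \leq C(1+\WLGnorm{u_n})$, we arrive at
\[
c_0\int_{-T}^T G(\dot u_n)\,dt+(\mu-q_G^{\infty}-\nu)\int_{-T}^T K(t,u_n)\,dt\leq C(1+\WLGnorm{u_n}).
\]
Assumption \ref{V:infty} bounds $K(t,u_n)$ from below by a multiple of $|u_n|^p$ outside a compact set, Proposition \ref{prop:adelkowyTrik} converts $\int|u_n|^p\,dt$ into a quantity controlling $\LGnorm{u_n}$ relative to $\WLGnorm{u_n}$, and the coercivity of $R_G$ on $\dot u_n$ (Proposition \ref{prop:RGcoercive}) prevents the derivative norm from diverging. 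Combining these estimates rules out $\WLGnorm{u_n}\to\infty$.

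Once boundedness is established, reflexivity of $\WTLGspace$ yields a subsequence with $u_n\rightharpoonup u$, and the compact embedding $\WTLGspace\hookrightarrow\hookrightarrow\Lpspace[\infty]$ gives $u_n\to u$ uniformly, whence $V_x(t,u_n)\to V_x(t,u)$ uniformly by continuity. Writing
\[
\bigl(\JSymbol'(u_n)-\JSymbol'(u)\bigr)(u_n-u)=\int_{-T}^T\inner{\nabla G(\dot u_n)-\nabla G(\dot u)}{\dot u_n-\dot u}\,dt+\int_{-T}^T\inner{V_x(t,u_n)-V_x(t,u)}{u_n-u}\,dt,
\]
the left-hand side tends to zero ($\JSymbol'(u_n)\to 0$ with $u_n-u$ bounded, and $\JSymbol'(u)(u_n-u)\to 0$ by weak convergence) and the second integral on the right tends to zero by uniform convergence. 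Hence
\[
\int_{-T}^T\inner{\nabla G(\dot u_n)-\nabla G(\dot u)}{\dot u_n-\dot u}\,dt\to 0.
\]

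The main obstacle, and the step where the choice of Orlicz framework bites, is to deduce from this modular-type convergence that $\dot u_n\to\dot u$ in $\LGspace$. Strict convexity of $G$ (a consequence of \ref{V:G} together with \ref{nabla2}) makes $\nabla G$ strictly monotone, and the standard Orlicz-space argument uses $\Delta_2$ to equate modular and norm convergence, yielding $R_G(\dot u_n-\dot u)\to 0$ and hence $\WLGnorm[G]{\dot u_n-\dot u}\to 0$. In the weaker setting where $\Delta_2\cap\nabla_2$ is assumed only at infinity this requires a more careful splitting of the integration domain according to the size of $|\dot u_n|$ and $|\dot u_n-\dot u|$. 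Combined with $u_n\to u$ in $\LGspace$ (which follows from $\Lpspace[\infty]\hookrightarrow\LGspace$ on the bounded interval $[-T,T]$), this gives $u_n\to u$ in $\WTLGspace$ and closes the verification of (PS).
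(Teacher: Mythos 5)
Your boundedness argument has the same skeleton as the paper's: combine $\mu\JSymbol(u_n)-\JSymbol'(u_n)u_n$ with \ref{V:AR}, invoke the definition of $q_G^\infty$ to tame $\int\inner{\nabla G(\dot u_n)}{\dot u_n}\,dt$, lower-bound $K$ via \ref{V:infty}, and feed the result into Proposition~\ref{prop:adelkowyTrik} and Proposition~\ref{prop:RGcoercive}. The phrase ``combining these estimates rules out $\WLGnorm{u_n}\to\infty$'' compresses what the paper does via a three-case analysis on the behaviour of $\LGnorm{u_n}$ and $\LGnorm{\dot u_n}$, and a full write-up would need that case split (in particular the case $\LGnorm{\dot u_n}$ bounded, $\LGnorm{u_n}\to\infty$ uses $p>1$, not the coercivity of $R_G$), but this part is substantively correct.

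The genuine gap is in the last step. You derive $\int_{-T}^T\inner{\nabla G(\dot u_n)-\nabla G(\dot u)}{\dot u_n-\dot u}\,dt\to 0$ and then invoke strict monotonicity of $\nabla G$, justified by the claim that strict convexity of $G$ is ``a consequence of \ref{V:G} together with \ref{nabla2}.'' This is false: \ref{nabla2} is a growth condition, not a structural one, and does not force strict convexity. For instance $G(x)=(|x_1|+\dots+|x_N|)^2$ satisfies \ref{delta2} and \ref{nabla2} globally (it is $2$-homogeneous) yet is constant on line segments contained in the level sets of the $\ell^1$-norm; smoothing the kinks gives a $C^1$ G-function covered by \ref{V:G} that is still not strictly convex. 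The paper itself treats strict convexity as an \emph{additional} hypothesis in the remark after the regularity proposition, so it cannot be used in this lemma. The paper's route avoids this entirely: it works with $\JSymbol'(u_n)(u_n-u)\to 0$, uses only the one-sided convexity inequality \eqref{eq:G-G<inner} to get $\limsup_n\int G(\dot u_n)\,dt\leq\int G(\dot u)\,dt$, then obtains the reverse inequality from Fatou's theorem after extracting a subsequence with $\dot u_n\to\dot u$ a.e.\ (which comes from the compact embedding $\WTLGspace\hookrightarrow\hookrightarrow\Wspace{1,1}$, a step your proposal omits), and finally concludes $\dot u_n\to\dot u$ in $\LGspace$ from modular convergence plus pointwise convergence under $\Delta_2$. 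To repair your argument, replace the strict-monotonicity step with this convexity/Fatou scheme and add the pointwise a.e.\ convergence of the derivatives.
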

\begin{proof}
From \ref{V:AR} and  \eqref{J'} we get
\begin{multline}
\label{eq:ARJ'}
\int_{-T}^{T}\mu W(t,u)-(q_G^{\infty}+\nu)K(t,u)\,dt
\leq\\\leq
-\JSymbol'(u)u + \int_{-T}^{T}\inner{\nabla G(\dot u)}{\dot u}\,dt +
\int_{-T}^{T}\inner{f(t)}{u}+\kappa(t)\,dt.
\end{multline}
From the definition of the functional we obtain
\begin{multline*}
\mu\int_{-T}^{T}G(\dot{u})\,dt + (\mu-q_G^{\infty}-\nu)\int_{-T}^{T}K(t,u)\,dt
=\\=
\mu\JSymbol(u)+\int_{-T}^{T} \mu W(t,u)-(q_G^{\infty}+\nu)K(t,u)\,dt -
\mu\int_{-T}^{T}\inner{f(t)}{u}\,dt.
\end{multline*}
Applying \eqref{eq:ARJ'}, the H\"older inequality and \ref{f:G*normf<Cf} we have
\begin{multline}
\label{eq:coupl}
\mu\int_{-T}^{T}G(\dot u)\,dt+(\mu-q_G^{\infty}-\nu)\int_{-T}^{T}K(t,u)\,dt
\leq\\ \leq
\mu\JSymbol(u)-\JSymbol'(u)u +
\int_{-T}^{T}\inner{\nabla G(\dot u)}{\dot u}\,dt+C_{\kappa}+(1-\mu)C_f\WLGnorm{u},
\end{multline}
where $C_{\kappa}=\int_{-T}^{T}\kappa(t)\,dt$ and $C_f=2\LGnorm[G^{\ast}]{f}$.
From the definition of $q_G^{\infty}$ there exists $M>0$ such that
\begin{equation}
\label{eq:nablaG<qGinf}
\inner{x}{\nabla G(x)}\leq (q_G^{\infty}+\nu)\,G(x), \text{ for $|x|>M$}.
\end{equation}

Hence
\begin{multline}
\label{gradG:estinf}
\int_{-T}^{T}\inner{\nabla G(\dot u)}{\dot u}\,dt
=
\int_{\{|\dot u|\geq M\}}\inner{\nabla G(\dot u)}{\dot u}\,dt
+
\int_{\{|\dot u|< M\}}\inner{\nabla G(\dot u)}{\dot u}\,dt
\leq\\ \leq
(q_G^{\infty}+\nu)\int_{\{|\dot u|\geq M\}}G(\dot u)\,dt + C_{\nabla G}
\leq
(q_G^{\infty}+\nu)\int_{-T}^{T}G(\dot u)\,dt+C_{\nabla G},
\end{multline}
where $C_{\nabla_G}=\sup_{|x|<M}2\,T\,M\,\nabla G(x)$. Applying \eqref{gradG:estinf}
we can rewrite \eqref{eq:coupl} as
\begin{multline}
\label{eq:przedostatnie}
(\mu-q_G^{\infty}-\nu)\int_{-T}^{T}G(\dot u)+K(t,u)\,dt
\leq\\ \leq
\mu\JSymbol(u)-\JSymbol'(u)u+C_{\kappa}+(1-\mu)C_f\WLGnorm{u}+C_{\nabla G}.
\end{multline}
From \ref{V:infty}, given any $0<\varepsilon_1<b_1$, there exists $\delta_1\geq0$  such that
\[
K(t,x)\geq (b_1-\varepsilon_1)|x|^p-\delta_1, \quad\text{ for $x\in\R^N$.}
\]
By Proposition \ref{prop:adelkowyTrik} we obtain
\begin{multline}
\label{eq:K}
\int_{-T}^{T}K(t,u)\,dt\geq\int_{-T}^{T}(b_1-\varepsilon_1)|u|^p \,dt-2T\delta_1
\geq \\ \geq
(b_1-\varepsilon_1)\imb{\infty}{\WLGspace}^{p-q}
\imb{G}{q}^{-q}\frac{\LGnorm{u}^q}{\WLGnorm{u}^{q-p}}-2T\delta_1,
\end{multline}
for any $q$ such that $G\prec|\cdot|^q$. Finally, applying \eqref{eq:K} to \eqref{eq:przedostatnie}
we obtain
\begin{multline}
\label{eq:ps_finalestimationforu}
(\mu-q_G^{\infty}-\nu)
\left(
\int_{-T}^{T}G(\dot u)\,dt
+
(b_1-\varepsilon_1)\imb{\infty}{\WLGspace}^{p-q}
\imb{G}{q}^{-q}\frac{\LGnorm{u}^q}{\WLGnorm{u}^{q-p }}
\right)
+\\-
(\mu-1)C_f\WLGnorm{u}+\JSymbol'(u)u\leq\mu\JSymbol(u)+C_{\kappa}+C_{\nabla_G}+C_{\delta_1},
\end{multline}
where $C_{\delta_1}=(2T\delta_1)(\mu-q_G^{\infty}-\nu)$.

Let $\{u_n\}\subset\WTLGspace$ will be a Palais-Smale sequence for $\JSymbol$. 	There exist $C_J$,
$C_{J'}>0$ such that
\begin{equation}
\label{eq:CJJ'}
\JSymbol(u_n)\leq C_J,\quad \JSymbol'(u_n)u_n\geq- C_{J'}\WLGnorm{u_n}
\end{equation}
Without loss of generality we can assume, that $\WLGnorm{u_n}>0$. Substituting $u_n$ into
\eqref{eq:ps_finalestimationforu} and by \eqref{eq:CJJ'} we obtain
\begin{equation}
\label{eq:un<C}
\WLGnorm{u_n}\left(\frac{R_G(\dot
u_n)}{\WLGnorm{u_n}}+\frac{\LGnorm{u_n}^{q}}{\WLGnorm{u_n}^{1+q-p}}-C'\right)\leq C'',
\end{equation}
where $C',C''$ are suitable constants independent of $n$.

We show that $\{u_n\}$ is bounded. On the contrary, suppose that there exists a subsequence of $u_n$
(still denoted $u_n$) such that $\WLGnorm{u_n}\to\infty$. Consider three cases.

\begin{enumerate}
\item  Let $\LGnorm{u_n}\to\infty$ and $\LGnorm{\dot u_n}\to\infty$ (again, w.l.o.g.  $\LGnorm{\dot
u_n}>0$). From Proposition \ref{prop:RGcoercive} we have that
\begin{multline*}
\frac{R_G(\dot u_n)}{\WLGnorm{u_n}}+\frac{\LGnorm{u_n}^{q}}{\WLGnorm{u_n}^{1+q-p}}
= \\ =
\frac{R_G(\dot u_n)}{\LGnorm{\dot u_n}}\frac{\LGnorm{\dot u_n}}{\WLGnorm{u_n}}
+
\left(\frac{\LGnorm{u_n}}{\WLGnorm{u_n}}\right)^{1+q-p}\LGnorm{u_n}^{p-1}\to\infty.
\end{multline*}

\item Let $\LGnorm{\dot u_n}\to\infty$ and $\LGnorm{u_n}$ is bounded. Then
\[
\frac{R_G(\dot u_n)}{\WLGnorm{u_n}}=\frac{R_G(\dot u_n)}{\LGnorm{u_n}+\LGnorm{\dot
u_n}}=\frac{\frac{R_G(\dot u_n)}{\LGnorm{\dot u_n}}}{\frac{\LGnorm{u_n}}{\LGnorm{\dot u_n}}+1} \to
\infty \text{ as } \LGnorm{\dot u_n}\to\infty.
\]

\item Let $\LGnorm{ u_n}\to\infty$ and  $\LGnorm{ \dot u_n}$ is bounded. Since $p>1$, we have
\[
\frac{R_G(\dot u_n)}{\WLGnorm{u_n}}+\frac{\LGnorm{u_n}^{q}}{\WLGnorm{u_n}^{1+q-p}}
\geq
\frac{\LGnorm{u_n}^q}{ (\LGnorm{u_n}+\LGnorm{\dot u_n})^{1+q-p}}\to\infty.
\]
\end{enumerate}
Therefore, in view of \eqref{eq:un<C}, $\{u_n\}$ is bounded in $\WTLGspace$.

It follows from reflexivity of $\WTLGspace$ and embeddings
$\WTLGspace \hookrightarrow\hookrightarrow\Lpspace[G]$,
$\WTLGspace \hookrightarrow \hookrightarrow\Wspace{1,1}$
that there exists $u\in\WTLGspace$ and a subsequence of $u_n$ (still denoted $u_n$) such that
$u_n\to u$ in $\LGspace$, $\dot u_n\to \dot u$ in $\Lpspace[1]$ and hence pointwise a.e.


Since $\{u_n\}$ is a Palais-Smale sequence, we have
\[
0\leftarrow \JSymbol' (u_n) (u_n-u)
=
\int_{-T}^T \inner{\nabla G(\dot u_n) }{ \dot u_n-\dot u}\,dt
+
\int_{-T}^T\inner{V_x(t,u_n)+f(t)}{u_n-u}\,dt.
\]
Since  $\int_{-T}^T\inner{V_x(t,u_n)+f(t)}{u_n-u}\,dt\to 0$ we can deduce that
\[
\int_{-T}^T \inner{\nabla G(\dot u_n) }{ \dot u_n-\dot u} \, dt \to 0.
\]
From \eqref{eq:G-G<inner} we obtain
\[
\int_{-T}^T G(\dot u_n) \, dt \leq \int_{-T}^T G(\dot u)\, dt +
\int_{-T}^T\inner {\nabla G(\dot u_n)}{ (\dot u_n-\dot u)}\, dt
\]
Hence
\[
\limsup_{n\to +\infty} \int_{-T}^T G(\dot u_n) \, dt \leq \int_{-T}^T G(\dot u)\, dt.
\]
On the other hand, by Fatou's Theorem we have
\[
\liminf_{n\rightarrow +\infty} \int_{-T}^{T} G(\dot u_n)\, dt \geq \int_{-T}^{T} G(\dot u)\, dt.
\]
Combining these inequalities we get that
\[
\int_{-T}^{T} G(\dot u_n)\, dt \to \int_{-T}^{T} G(\dot u)\,dt.
\]
Since norm convergence is equivalent to modular convergence, $\dot u_n\to \dot u$ in $\LGspace$.
\end{proof}

We next prove that $\JSymbol$ is negative for some point outside $B_\rho(0)$.

\begin{lemma}
\label{lem:Je<0}
There exist $e\in\WTLGspace$ such that $\WLGnorm{e}>\rho$ and $\JSymbol(e)<0$.
\end{lemma}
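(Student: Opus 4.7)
The plan is to take $e$ to be a constant function. For any $x_0 \in \R^N \setminus \{0\}$ the function $e_s(t) \equiv s x_0$ lies in $\WTLGspace$ (it trivially satisfies $e_s(-T) = e_s(T)$ and has $\dot e_s \equiv 0$), and its norm
\[
\WLGnorm{e_s} = \LGnorm{s x_0} = s\,\LGnorm{x_0}
\]
scales linearly in $s$, so $\WLGnorm{e_s} > \rho$ for all sufficiently large $s$. Because the kinetic term vanishes,
\[
\JSymbol(e_s) = \int_{-T}^T \bigl(K(t, s x_0) - W(t, s x_0)\bigr)\,dt + \int_{-T}^T \inner{f(t)}{s x_0}\,dt,
\]
and the second integral is bounded in absolute value by $2 s \LGnorm{x_0} \LGastnorm{f}$ via the H\"older inequality, hence is $O(s)$.

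The main step is to use \ref{V:infty} to drive the first integral to $-\infty$ faster than this linear correction. Since the liminf in the second half of \ref{V:infty} strictly exceeds $3$, I would fix $\theta > 3$ and $R_0 > 0$ such that
\[
W(t, x) \geq \theta \max\{K(t, x), G(x)\} \geq \theta K(t, x)
\]
for every $|x| \geq R_0$ and $t \in [-T, T]$; the ratio is well-defined there because the first half of \ref{V:infty} forces $K(t, x) > 0$ once $|x|$ is large (enlarge $R_0$ if needed). On that tail, $K - W \leq -(\theta - 1) K$, and the first half of \ref{V:infty} also yields $K(t, x) \geq (b_1 - \varepsilon)|x|^p - \delta$ uniformly in $t$ for some $\varepsilon \in (0, b_1)$ and $\delta > 0$. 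Substituting, for $s |x_0| \geq R_0$,
\[
\JSymbol(e_s) \leq -2T(\theta - 1)\bigl[(b_1 - \varepsilon) s^p |x_0|^p - \delta\bigr] + 2 s \LGnorm{x_0} \LGastnorm{f}.
\]

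Since $p > 1$, the $-s^p$ term dominates the linear and constant remainders, so $\JSymbol(e_s) \to -\infty$ as $s \to \infty$. I would then pick any $s$ large enough to force simultaneously $\WLGnorm{e_s} > \rho$ and $\JSymbol(e_s) < 0$, and set $e := e_s$. There is no serious obstacle here; the main items to check carefully are the standard translation of the uniform-in-$t$ liminf in \ref{V:infty} into a uniform pointwise inequality on a tail, and that the chosen constant test function really does belong to $\WTLGspace$ with the claimed norm behavior.
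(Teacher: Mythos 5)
Your proof is correct, and it takes a genuinely simpler route than the paper's. You pick the constant test function $e_s(t)\equiv sx_0$, so the kinetic term $\int G(\dot e_s)\,dt$ vanishes identically; you then use \emph{both} halves of \ref{V:infty} — the second to get $W\geq\theta K$ (hence $K-W\leq-(\theta-1)K$) on a tail, the first to get $K(t,x)\geq(b_1-\varepsilon)|x|^p-\delta$ uniformly — and drive the potential term to $-\infty$ like $-Cs^p$ with $p>1$, beating the $O(s)$ H\"older bound on $\int\inner{f}{sx_0}\,dt$. The paper instead uses the piecewise-linear ``tent'' function $e(t)=\xi\bigl(1-\tfrac{|t|}{T+1}\bigr)v$, which has $\dot e\neq 0$; it compensates by observing that $|\dot e(t)|\leq|e(t)|$ pointwise and that $G$ is nondecreasing along each ray through the origin, hence $\int G(\dot e)-G(e)\,dt\leq 0$. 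The paper then only needs the second half of \ref{V:infty} to derive $K-W\leq-(2+\varepsilon_0)G$, and absorbs $\inner{f}{e}$ via Fenchel rather than H\"older so that everything is expressed in terms of $G(e)$, whose integral blows up by $G(v)/|v|\to\infty$. Your version is shorter and uses no property of $G$ beyond $G(0)=0$ and homogeneity of the Luxemburg norm; the paper's version is closer to the homoclinic-problem template it generalizes, where constants are not admissible test functions, and also shows that the first half of \ref{V:infty} is not strictly needed for this particular lemma (one can, as the paper does, argue with $W\geq\theta G$ and the superlinearity of $G$ in place of the lower bound on $K$). Both are valid; minor bookkeeping aside (you should take a single $R_0$ large enough that $K>0$, $W\geq\theta K$, and the tail estimate all hold simultaneously), there is nothing to fix.
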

\begin{proof}
By assumption \ref{V:infty}, there exist $\varepsilon_0,r>0$ such that
\[
W(t,x)\geq(3+\varepsilon_0)\max\{K(t,x),G(x)\}\quad \text{for $|x|>r$}.
\]
which gives
\begin{equation}
\label{ineq:W-K}
K(t,x)-W(t,x)\leq(2+\varepsilon_0)G(x)\quad \text{for $|x| > r$}.
\end{equation}
Fix $v\in\R^N$. For $\xi>T+1$ define $e\colon [-T,T]\to\R^N$ by
\[
e(t)=\xi\left(1-\frac{|t|}{T+1}\right)v.
\]
Direct computation shows
\[
\dot e(t) =
\begin{cases}
-\frac{\xi}{T+1}\,v, & \text{$t\in (0,T],$}\\
\frac{\xi}{T+1}\,v, & \text{$t\in [-T,0).$}
\end{cases}
\]
Since $\LGnorm[\infty]{e}=\xi>T+1$ and $\LGnorm[\infty]{\dot e}=\xi/(T+1)>1$, we can choose $\xi$
such that both \eqref{ineq:W-K} and $\WLGnorm{e}\geq \rho$ hold. From \eqref{J}, the Fenchel
inequality and \eqref{ineq:W-K} we have
\begin{multline*}
\JSymbol(e) \leq \int_{-T}^{T}G(\dot e)+K(t,e)-W(t,e)+G(e)+G^{\ast}(f)\,dt
\leq \\ \leq
\int_{-T}^{T}G(\dot e)-G(e)-\varepsilon_0G(e)+G^{\ast}(f)\,dt.
\end{multline*}
Since $1-\tfrac{|t|}{T+1}\geq \tfrac{1}{T+1}$ for $t\in[-T,T]$, we have
\[
\int_{-T}^{T}G(\dot e)-G(e)\,dt
=
\int_{-T}^T G\left(\frac{\xi}{T+1} v\right) - G\left(\xi\left(1-\frac{|t|}{T+1}\right) v
\right)\,dt \leq 0.
\]
Choosing $\xi$ large enough we get
\[
\JSymbol(e)\leq \int_{-T}^{T}-\varepsilon_0\,G(e)+G^{\ast}(f)\,dt < 0.
\]
\end{proof}

In order to show that $\JSymbol$ satisfies the fourth assumption of Mountain Pass Theorem, we first
provide some estimates for $\modularG{\dot{u}}+\modularG{u}$ on $\partial B_\rho(0)$.

If \ref{delta2} and \ref{nabla2} are satisfied globally then we can use Proposition
\ref{prop:norm_mod} to estimate $\modularG{\dot{u}}+\modularG{u}$ from below by
$(\rho/2)^r$, $r>1$, for any $\rho> 0$. If $G$ does not satisfies \ref{delta2} and \ref{nabla2}
globally then we cannot use Proposition \ref{prop:norm_mod} (for explanation see Remark
\ref{rem:apinfinityestGlambda}). In this case we use equivalent norm and Proposition
\ref{prop:normequivalence} but we obtain only that $\modularG{\dot{u}}+\modularG{u}\geq \rho/2$.
Moreover, we are forced to assume $\rho > 2$.

Let $u\in\WTLGspace$ be such that $\WLGnorm{u}=\rho$. Set $\rho_1=\LGnorm{u}$, $\rho_2=\LGnorm{\dot
u}$, $\rho_1+\rho_2=\rho$. Assuming that $G$ satisfies \ref{delta2} an \ref{nabla2} globally
we get by Proposition \ref{prop:norm_mod} the following estimates:
\begin{enumerate}
\item  If $\rho_1,\rho_2\leq 1$ then
$
R_G(\dot u)+R_G(u)\geq \LGnorm{\dot u}^{q_G}+\LGnorm{u}^{q_G}.
$
Hence
\begin{equation}
\label{eq:rho12<1}
R_G(\dot u)+R_G(u)\geq 2^{1-q_G}(\LGnorm{\dot u}+\LGnorm{ u})^{q_G}
\geq \left(\rho/2\right)^{q_G}.
\end{equation}
since ${\rho_1}^{q_G}+\rho_2^{q_G}\geq 2^{1-{q_G}}(\rho_1+\rho_2)^{q_G}$.

\item If $\rho_1\leq 1$, $\rho_2\geq 1$ then
$
\left(\rho_1+\rho_2\right)^{p_G}\leq (2\rho_2)^{p_G}\leq
2^{p_G}\left(\rho_1^{q_G}+\rho_2^{p_G}\right).
$
Hence
\begin{equation}
\label{eq:rho1<1rho2>1}
R_G(\dot u)+R_G(u)\geq \LGnorm{\dot u}^{p_G}+\LGnorm{u}^{q_G}\geq (\rho/2)^{p_G}.
\end{equation}

\item If $\rho_1\geq 1$, $\rho_2\leq 1$ then
$
(\rho_1+\rho_2)^{p_G}\leq (2\rho_1)^{p_G}\leq 2^{p_G}\left(\rho_1^{p_G}+\rho_2^{q_G}\right).
$
Thus
\begin{equation}
\label{eq:rho1>1rho2<1}
R_G(\dot u)+R_G(u)\geq \LGnorm{\dot u}^{q_G}+\LGnorm{u}^{p_G}\geq (\rho/2)^{p_G}.
\end{equation}

\item If $\rho_1, \rho_2\geq 1$ then
\begin{equation}
\label{eq:rho12>1}
R_G(\dot u)+R_G(u)\geq  \LGnorm{\dot u}^{p_G}+\LGnorm{u}^{p_G}\geq (\rho/2)^{p_G}.
\end{equation}
\end{enumerate}

From the other hand, proposition \ref{prop:normequivalence} furnishes the bound
\[
\inf\left\{\lambda>0\colon \int_{-T}^T G\left(\frac{u}{\lambda}\right)+ G\left(\frac{\dot
u}{\lambda}\right)\, dt\leq 1 \right\}
\geq \frac{1}{2}\rho.
\]
Therefore
\[
\int_{-T}^T G\left(\frac{2u}{\rho}\right)+ G\left(\frac{2\dot u}{\rho}\right)\, dt\geq 1.
\]
and consequetly,
\begin{equation}
\label{eq:mod>rho}
R_G(u)+ R_G(\dot u)\geq \frac{\rho}{2}
\end{equation}
provided $\rho>2$.

\begin{lemma}
\label{lem:J>alpha_rho<1}
Assume that either \eqref{thm1:as1} or \eqref{frho>2} holds. There exists positive constant
$\alpha$ such that $\JSymbol\rvert_{\partial B_{\rho}(0)}\geq
\alpha.$
\end{lemma}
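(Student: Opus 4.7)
The plan is to reduce $\JSymbol(u)$ on $\partial B_\rho(0)$ to a lower bound purely in terms of the modular $\modularG{u}+\modularG{\dot u}$, and then read the conclusion off the modular estimates \eqref{eq:rho12<1}--\eqref{eq:mod>rho} assembled immediately above.

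First, note that the scaling $\rho = \rho_0/\imb{\infty}{\WTLGspace}$ from \eqref{eq:rho} is calibrated so that the continuous embedding $\WTLGspace \hookrightarrow \Lpspace[\infty]$ yields $\LGnorm[\infty]{u} \leq \rho_0$ for every $u$ with $\WLGnorm{u} = \rho$. Hence $|u(t)| \leq \rho_0$ pointwise and \ref{V:near0} applies to give $V(t,u(t)) \geq b\,G(u(t)) - a(t)$ for every $t \in [-T,T]$. For the forcing term I combine the Fenchel inequality with the evenness of $G$ in \ref{V:G} to obtain $\inner{f(t)}{u(t)} \geq -G(u(t)) - G^\ast(f(t))$. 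Substituting both bounds into the definition \eqref{J} of $\JSymbol$ and setting $m := \min\{1, b-1\} > 0$,
\begin{equation*}
\JSymbol(u) \geq \modularG{\dot u} + (b-1)\modularG{u} - \modularGast{f} - \int_{-T}^{T} a(t)\,dt \geq m\bigl(\modularG{\dot u}+\modularG{u}\bigr) - \modularGast{f} - \int_{-T}^{T} a(t)\,dt.
\end{equation*}

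For Theorem \ref{thm:main_rho0<1}, the global validity of \ref{delta2} and \ref{nabla2} licenses Proposition \ref{prop:norm_mod}, and the four displays \eqref{eq:rho12<1}--\eqref{eq:rho12>1} resolve according to whether $\rho_1 := \LGnorm{u}$ and $\rho_2 := \LGnorm{\dot u}$ lie on either side of $1$. Case (1) can only occur for $\rho \leq 2$ and supplies $(\rho/2)^{q_G}$, while the other three cases supply $(\rho/2)^{p_G}$. Because $(\rho/2)^{q_G} \leq (\rho/2)^{p_G}$ precisely for $\rho \leq 2$, the uniform worst-case bound is $(\rho/2)^{q_G}$ when $\rho \leq 2$ and $(\rho/2)^{p_G}$ when $\rho > 2$. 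Assumption \eqref{thm1:as1} is exactly what forces the right-hand side above to become a strictly positive constant, which may be taken as $\alpha$. For Theorem \ref{thm:main_rho>2}, global $\Delta_2\cap\nabla_2$ is unavailable, but the assumption $\rho \geq 2$ combined with Proposition \ref{prop:normequivalence} yields the weaker estimate $\modularG{u}+\modularG{\dot u} \geq \rho/2$ of \eqref{eq:mod>rho}, and \eqref{frho>2} again produces $\alpha > 0$.

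The only genuine obstacle is modular--versus--norm bookkeeping: Proposition \ref{prop:norm_mod} switches exponents between $p_G$ and $q_G$ as $\LGnorm{\cdot}$ crosses $1$, so the lower estimate for $\modularG{u}+\modularG{\dot u}$ splits into the four sub-cases already treated preceding the lemma. Once those displays are in hand, the proof reduces to the three routine steps above (embedding-based pointwise control of $|u|$, Fenchel for the forcing, and selection of the correct branch), and $\alpha$ is read off the quantitative hypothesis on $\modularGast{f} + \int_{-T}^T a\,dt$.
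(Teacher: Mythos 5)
Your proof is correct and follows essentially the same path as the paper's: the pointwise embedding bound $|u(t)|\leq\rho_0$ on $\partial B_\rho(0)$, the substitution of \ref{V:near0} together with Fenchel's inequality to reach $\JSymbol(u)\geq\min\{1,b-1\}\bigl(R_G(u)+R_G(\dot u)\bigr)-R_{G^\ast}(f)-\int_{-T}^{T}a(t)\,dt$, and then selecting the appropriate modular lower bound from \eqref{eq:rho12<1}--\eqref{eq:mod>rho} according to whether \eqref{thm1:as1} or \eqref{frho>2} is in force. Your explicit remark on the use of evenness in the Fenchel step, and your observation that case (1) of the modular estimates can only occur when $\rho\leq 2$, merely make visible details the paper leaves implicit.
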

\begin{proof}
From the definition of $\rho$ and embedding $\WTLGspace \hookrightarrow\Lpspace[\infty]$  we have
\[
|u(t)|\leq\LGnorm[\infty]{u}\leq\imb{\infty}{\WTLGspace}\WLGnorm{u}
=
\imb{\infty}{\WTLGspace}\rho=\rho_0\quad \text{for $t\in[-T,T].$}
\]
From \ref{V:near0} and the Fenchel inequality we obtain
\begin{multline*}
\JSymbol(u)
\geq
\int_{-T}^{T}G(\dot{u})+bG(u)-a(t)+\inner{f}{u}\,dt
\geq\\\geq
\min\{1,b-1\}(R_G(\dot{u})+R_G(u))-R_{ G^{\ast}}(f)-\int_{-T}^{T}a(t)\,dt.
\end{multline*}

Assume that \eqref{thm1:as1} holds. If $\rho\leq 2$ then $(\rho/2)^{p_G}\geq (\rho/2)^{q_G}$,
\eqref{eq:rho12<1}, \eqref{eq:rho1<1rho2>1} and \eqref{eq:rho1>1rho2<1} yields
\[
\JSymbol(u)
\geq
\min\{1,b-1\}(\rho/2)^{q_G} -R_{G^{\ast}}(f)-\int_{-T}^{T}a(t)\,dt =: \alpha.
\]
If  $\rho> 2$,  then by \eqref{eq:rho1<1rho2>1}, \eqref{eq:rho1>1rho2<1} and \eqref{eq:rho12>1}
we get
\[
\JSymbol(u)\geq \min\{1,b-1\}(\rho/2)^{p_G} -R_{G^{\ast}}(f)-\int_{-T}^{T}a(t)\,dt>0 =:\alpha.
\]
From \eqref{thm1:as1} it follows that in both cases $\alpha>0$.

Assume that \eqref{frho>2} holds. From \eqref{eq:mod>rho} we obtain
\begin{multline*}
\JSymbol(u)
\geq
\min\{1,b-1\}(R_G(\dot u)+R_G(u))-R_{G^{\ast}}(f)-\int_{-T}^{T} a(t)
\geq\\\geq
\min\{1,b-1\}(\rho/2)-R_{G^{\ast}}(f)-\int_{-T}^{T} a(t)\,dt=:\alpha.
\end{multline*}
From \eqref{frho>2} we have $\alpha>0$.
\end{proof}

Now we are in position to prove our main theorems. Note that by \ref{V:0} and $G(0)=0$ we have
$\JSymbol(0)=0$. From Lemmas \ref{lem:ps}, \ref{lem:Je<0} and \ref{lem:J>alpha_rho<1}   we have
that $\JSymbol$ satisfies all assumptions of the Mountain Pass Theorem. Hence there exists a
critical point $u\in\WTLGspace$ of $\JSymbol$ and \eqref{eq:ELT} have periodic solution.

Actually, we can show that any solution to \eqref{eq:ELT} is more regular (cf. Corollary 16.16 in
\cite{Cla13}).

\begin{proposition}
If $u\in\WTLGspace$ is a solution of \eqref{eq:ELT}, then $u\in\WLGspace[\infty]$.
\end{proposition}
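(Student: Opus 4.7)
The plan is to exploit the integrated form of the Euler-Lagrange equation together with the superlinearity of $G$ to promote the $\LGspace$-integrability of $\dot u$ to essential boundedness.

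First I would use the embedding $\WTLGspace\hookrightarrow\Lpspace[\infty]$ already invoked throughout the paper to obtain a constant $R>0$ with $\LGnorm[\infty]{u}\leq R$. By continuity of $V_x$ on the compact set $[-T,T]\times\overline{B_R(0)}$ this yields $V_x(\cdot,u(\cdot))\in\Lpspace[\infty]\subset\Lpspace[1]$. Since $G$ is a G-function, $G^\ast$ is a nontrivial Young function vanishing at the origin and is therefore bounded below by a linear function at infinity; hence on the bounded interval $[-T,T]$ we have $\LGspace[G^\ast]\hookrightarrow\Lpspace[1]$ and in particular $f\in\Lpspace[1]$. Writing \eqref{eq:ELT} a.e.\ as $\tfrac{d}{dt}\nabla G(\dot u)=V_x(t,u)+f$, the right-hand side is now in $\Lpspace[1]$, so $\nabla G(\dot u)$ admits an absolutely continuous representative and is therefore bounded on $[-T,T]$: there exists $M>0$ with $|\nabla G(\dot u(t))|\leq M$ for a.e.\ $t$.

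The final step is to invert $\nabla G$. From $G(0)=0$, convexity and the differential inequality \eqref{eq:G-G<inner} one gets $\inner{\nabla G(v)}{v}\geq G(v)$, and Cauchy--Schwarz then gives $|\nabla G(v)|\geq G(v)/|v|$ for $v\neq 0$; the superlinearity $G(v)/|v|\to\infty$ from \ref{V:G} forces $|\nabla G(v)|\to\infty$ as $|v|\to\infty$, so $\nabla G$ is proper and the bound on $\nabla G(\dot u)$ transfers to a bound on $\dot u$. This yields $\dot u\in\Lpspace[\infty]$ and, combined with the already-established $u\in\Lpspace[\infty]$, the conclusion $u\in\WLGspace[\infty]$.

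I expect the only delicate point to be the passage from the variational (weak) formulation $\JSymbol'(u)=0$ to the pointwise integrated identity used above; with the $\Lpspace[1]$ bound on the right-hand side of the equation in hand this reduces to the standard fact that a distribution with $\Lpspace[1]$ weak derivative is absolutely continuous, but it is the only step that requires care beyond formal manipulation.
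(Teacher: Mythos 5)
Your proof is correct and follows essentially the same route as the paper: integrate the Euler--Lagrange equation to obtain a bound on $\nabla G(\dot u)$, then use the convexity inequality $G(v)\leq\inner{\nabla G(v)}{v}$ together with the superlinearity $G(v)/|v|\to\infty$ to transfer this to a bound on $\dot u$. Your version merely spells out two details the paper leaves tacit, namely that $V_x(\cdot,u(\cdot))$ and $f$ lie in $\Lpspace[1]$ so that the integrated equation makes sense (and incidentally the paper's displayed integral identity omits the $f$ term, which you correctly include), and that the last step can be phrased as properness of $\nabla G$; also note that the proposition already assumes $u$ solves \eqref{eq:ELT} pointwise, so the weak-to-strong passage you flag as delicate is not actually needed here.
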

\begin{proof}
Let $u\in\WLGspace$ be a solution of \eqref{eq:ELT}. Then
\[
\nabla G(\dot{u}(t))=\int_{-T}^{t}\nabla V(t,u(t))\,dt+C
\]
and there exists $M>0$ such that $|\nabla G(\dot u(t))|\leq M<\infty.$ From the other hand
\[
G(\dot u(t))\leq\inner{\nabla G(\dot u(t))}{\dot u(t)}\leq M|\dot u(t)|.
\]
Since $\frac{G(v)}{|v|}\to\infty $ as $|v|\to\infty$, we obtain $|\dot u(t)|$ is bounded.
\end{proof}

\begin{remark}
If $G$ is strictly convex then one can show that if $u\in\WTLGspace$ is a solution of
\eqref{eq:ELT}, then $u\in\mathbf{C}^1$.
\end{remark}

\begin{remark}
Theorem \ref{thm:main_rho0<1} remains true if we change  assumption \eqref{thm1:as1}  to
\begin{equation}
\label{eq:RGf_pat1}
R_G^{\ast}(f)+\int_{-T}^{T}a(t)\,dt<\min\{1,b-1\}\begin{cases}  2(\rho/2)^{q_G},&\rho\leq
2^{1-1/(q_G-p_G)}\\(\rho/2)^{p_G}&\rho> 2^{1-1/(q_G-p_G)}.\end{cases}
\end{equation}
Estimate in the first case is better than \eqref{thm1:as1} but it is taken  on smaller set. In the
second case estimate is the same as in \eqref{thm1:as1} but can be taken on bigger set.
\end{remark}

\begin{remark}
In the proof of Lemma \ref{lem:J>alpha_rho<1} we can use the H\"older inequality instead of the
Fenchel inequality to estimate $\int_{-T}^{T}\inner{f}{u}\,dt$. It allows us to take $b>0$ if
$\rho\leq1.$
\end{remark}


\appendix

\section{}

\renewcommand{\appendixname}{}

Assume that $G$ satisfies \ref{nabla2} globally. It is easy to show that $G^{\ast}$ satisfies
\ref{delta2} globally with $K_1^{\ast}=2K_2$.

Since $G\in\mathbf{C}^1$, we have
\[
K_1G(x)\geq G(2x)\geq G(2x)-G(x)\geq \inner{x}{\nabla G(x)}\quad \text{for all $x\in \R^N$.}
\]
Let $y\in \R^N$ and $s\in\partial G^{\ast}(y)$. Since  $G^{\ast}$ satisfies \ref{delta2} globally,
we have
\[
K_1^{\ast}G^{\ast}(y)\geq G^{\ast}(2y)\geq G^{\ast}(2y)-G^{\ast}(y)\geq\inner{s}{y}
\quad \text{for all $y\in \R^N$}
\]
Let $x\in \R^N$. Then $x\in\partial G^{\ast}(\nabla G(x))$ and $G(x)+G^{\ast}(\nabla G(x))=
\inner{x}{\nabla G(x)}$. It follows that
\[
G(x)=\inner{x}{\nabla G(x)}-G^{\ast}(\nabla G(x))
\leq
\left(1-\frac{1}{K_1^{\ast}}\right)\inner{x}{\nabla G(x)}.
\]
Finally
\begin{equation}
\label{ineq:alem:nablaG/G}
\frac{2K_2}{2K_2-1} \leq \frac{\inner{x}{\nabla G(x)}}{G(x)}\leq K_1
\quad \text{for all $x\in \R^N$.}
\end{equation}
Since $K_2>1$, we have $\frac{2K_2}{2K_2-1}>1$ and from \eqref{ineq:alem:nablaG/G} we obtain
\begin{equation*}
p_G>1\quad  \text{and}\quad q_G<\infty.
\end{equation*}
For any $x\in \R^N$ and $\lambda\geq 1$ we have
\[
\log{G(\lambda x)}-\log{G(x)}
=
\int_{1}^{\lambda}\frac{\inner{\nabla G(\lambda x)}{x}}{G(\lambda x)}\,d\lambda
\leq
\int_{1}^{\lambda}\frac{q_G}{\lambda}\,d\lambda=\log{\lambda^{q_G}}.
\]
Thus
\begin{equation}
\label{eq:Glambda<lambdaq_G}
G(\lambda x)\leq \lambda^{q_G}G(x)\quad \text{for all $x\in\R^N$, $\lambda\geq 1$}.
\end{equation}
Similarly, we show
\begin{equation}
\label{eq:Glambda<lambdap_G}
G(\lambda x)\geq \lambda^{p_G}G(x)\quad\text{ for all $x\in\R^N$, $\lambda\geq 1$.}
\end{equation}

\begin{lemma}
\label{lem:pG<R<qG}
Let $u\in \LGspace$.
\begin{enumerate}
\item If $\LGnorm{u}<1$ then $R_G(u)\geq\LGnorm{u}^{q_G}$.
\item If $\LGnorm{u}>1$ then $R_G(u)\geq\LGnorm{u}^{p_G}$.
\end{enumerate}
\end{lemma}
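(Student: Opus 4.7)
The plan is to reduce everything to the normalized vector $u/\LGnorm{u}$, whose modular equals $1$, and then apply the power-type scaling bounds \eqref{eq:Glambda<lambdaq_G} and \eqref{eq:Glambda<lambdap_G} that were just derived from the Simonenko indices.

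The key preliminary observation is that, since $G$ satisfies \ref{delta2} globally, the modular $R_G$ is continuous with respect to convergence of scalar multiples of a fixed function, so that the infimum in the Luxemburg norm is actually attained: for every $u \neq 0$ one has $R_G(u/\LGnorm{u}) = 1$. This is the only place where global $\Delta_2$ (rather than $\Delta_2$ at infinity) is genuinely needed, and it is the step I expect to be the main obstacle to write cleanly, since it is not the inequality $R_G(u/\LGnorm{u}) \leq 1$ (which is immediate from the infimum) but the reverse one that matters; it follows from the fact that $\Delta_2$ globally makes $\lambda \mapsto R_G(u/\lambda)$ continuous and finite on $(0,\infty)$, so the infimum defining the norm is attained and the defining inequality is an equality.

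Once this normalization is in hand, both cases reduce to a single pointwise substitution. Set $r = \LGnorm{u}$. For case (2), with $r > 1$, apply \eqref{eq:Glambda<lambdap_G} with $\lambda = r \geq 1$ and $x = u(t)/r$ pointwise to obtain $G(u(t)) \geq r^{p_G}\,G(u(t)/r)$; integrating over $[-T,T]$ and using $R_G(u/r) = 1$ yields $R_G(u) \geq r^{p_G} = \LGnorm{u}^{p_G}$. For case (1), with $r < 1$, I cannot apply \eqref{eq:Glambda<lambdaq_G} directly to $u(t)/r$, but I can apply it instead with $\lambda = 1/r > 1$ and $x = u(t)$, which gives $G(u(t)/r) \leq (1/r)^{q_G}\,G(u(t))$, equivalently $r^{q_G}\,G(u(t)/r) \leq G(u(t))$; integrating and using the normalization $R_G(u/r)=1$ yields $R_G(u) \geq r^{q_G} = \LGnorm{u}^{q_G}$.

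Thus the whole proof is really a two-line calculation once the two tools -- the scaling inequalities \eqref{eq:Glambda<lambdaq_G}, \eqref{eq:Glambda<lambdap_G} and the normalization identity $R_G(u/\LGnorm{u}) = 1$ -- are in place. No additional estimates are needed, and the role of the $\Delta_2$ and $\nabla_2$ conditions globally (rather than merely at infinity) is precisely to guarantee that $1 < p_G \leq q_G < \infty$ via \eqref{ineq:alem:nablaG/G} and that the Luxemburg infimum is attained, so that the scaling bounds can be read at the single point $\lambda = \LGnorm{u}$.
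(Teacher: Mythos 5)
Your proof is correct and uses the same two scaling inequalities \eqref{eq:Glambda<lambdaq_G} and \eqref{eq:Glambda<lambdap_G} as the paper, but the normalization step is handled differently. You rescale once to $u/\LGnorm{u}$ and then invoke the norm-modular unit identity $R_G(u/\LGnorm{u})=1$; as you correctly note, the direction $R_G(u/\LGnorm{u})\geq 1$ is the nontrivial one, and justifying it requires the continuity of $\lambda\mapsto R_G(u/\lambda)$, which in turn needs global $\Delta_2$ (otherwise the modular could jump to $\infty$ just below $\LGnorm{u}$). The paper sidesteps this entirely: it picks $\beta$ strictly between $0$ (resp.\ $1$) and $\LGnorm{u}$, observes that $R_G(u/\beta)\geq 1$ holds \emph{automatically} from the definition of the Luxemburg infimum (since $\beta<\LGnorm{u}$ means $\beta$ is not a competitor), applies the same scaling inequality with $\lambda=\LGnorm{u}/\beta$ in case (1) or $\lambda=\beta$ in case (2) to get $R_G(u)\geq\beta^{q_G}$ (resp.\ $\beta^{p_G}$), and then lets $\beta\uparrow\LGnorm{u}$. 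This is slightly more elementary because it never needs to attain the infimum or invoke the modular-continuity argument. Your route is fine as written provided you flesh out the continuity-via-$\Delta_2$ step with a dominated-convergence argument; but note that the $\beta$-device shows that claim is not actually necessary for this lemma.
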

\begin{proof}
For any $0<\beta<\LGnorm{u}<1$ we have $R_G\left(\frac{u}{\beta}\right)\geq 1$. From
\eqref{eq:Glambda<lambdaq_G} we obtain that
$
G\left(\frac{x}{\beta}\right)\leq\left(\frac{1}{\beta}\right)^{q_G}G(x)$ for all $x\in\R^N$. Hence
\[
R_G(u)\geq\beta^{q_G}R_G\left(\frac{u}{\beta}\right)\geq \beta^{q_G}.
\]
Letting $\beta\uparrow\LGnorm{u}$ gives $R_G(u)\geq\LGnorm{u}^{q_G}.$

For any $1<\beta<\LGnorm{u}$ we have $R_G\left(\frac{u}{\beta}\right)> 1.$ Then from
\eqref{eq:Glambda<lambdap_G} we obtain that
$
G(x)\geq\beta^{p_G}G\left(\frac{x}{\beta}\right)$ for all $x\in\R^N$. Hence
\[
R_G(u)\geq\beta^{p_G}R_G\left(\frac{u}{\beta}\right)\geq \beta^{p_G}.
\]
Letting $\beta\uparrow\LGnorm{u}$ gives $R_G(u)\geq\LGnorm{u}^{p_G}.$
\end{proof}

\begin{remark}
\label{rem:apinfinityestGlambda}
If $G$ satisfies \ref{delta2} and \ref{nabla2} (not globally), estimations similar to
\eqref{eq:Glambda<lambdaq_G} and \eqref{eq:Glambda<lambdap_G} can be obtained for sufficiently
large $|x|$. However, even if $\LGnorm{u}$ is large it does not necessarily mean that $|u(t)|$.
Hence we cannot use these estimates to obtain result similar to Lemma \ref{lem:pG<R<qG}.
\end{remark}

\bibliographystyle{elsarticle-num}
\bibliography{PerSolAniOSspace}

\begin{thebibliography}{10}
\expandafter\ifx\csname url\endcsname\relax
  \def\url#1{\texttt{#1}}\fi
\expandafter\ifx\csname urlprefix\endcsname\relax\def\urlprefix{URL }\fi
\expandafter\ifx\csname href\endcsname\relax
  \def\href#1#2{#2} \def\path#1{#1}\fi

\bibitem{AciMaz17}
S.~Acinas, F.~Mazzone, Periodic solutions of {E}uler-{L}agrange equations in an
  {O}rlicz-{S}obolev space setting, preprint on ArXive at
  \url{https://arxiv.org/abs/1708.06657}.

\bibitem{AciBurGiuMazSch15}
S.~Acinas, L.~Buri, G.~Giubergia, F.~Mazzone, E.~Schwindt, Some existence
  results on periodic solutions of {E}uler-{L}agrange equations in an
  {O}rlicz-{S}obolev space setting, Nonlinear Anal. 125 (2015) 681--698.

\bibitem{PasWan16}
D.~Pa\c~sca, Z.~Wang, On periodic solutions of nonautonomous second order
  {H}amiltonian systems with {$(q,p)$}-{L}aplacian, Electron. J. Qual. Theory
  Differ. Equ. (2016) Paper No. 106, 9.

\bibitem{XuTan07}
B.~Xu, C.-L. Tang, Some existence results on periodic solutions of ordinary
  {$p$}-{L}aplacian systems, J. Math. Anal. Appl. 333~(2) (2007) 1228--1236.

\bibitem{MaZha09}
S.~Ma, Y.~Zhang, Existence of infinitely many periodic solutions for ordinary
  {$p$}-{L}aplacian systems, J. Math. Anal. Appl. 351~(1) (2009) 469--479.

\bibitem{MawWil89}
J.~Mawhin, M.~Willem, {Critical point theory and {H}amiltonian systems},
  Springer-Verlag, New York, 1989.

\bibitem{Dao16}
A.~Daouas, Existence of homoclinic orbits for unbounded time-dependent
  {$p$}-{L}aplacian systems, Electron. J. Qual. Theory Differ. Equ. (2016)
  Paper No. 88, 12.

\bibitem{Ter12}
S.~Tersian, On symmetric positive homoclinic solutions of semilinear
  p-laplacian differential equations, Boundary Value Problems 2012~(1) (2012)
  121.

\bibitem{IzyJan05}
M.~Izydorek, J.~Janczewska, Homoclinic solutions for a class of the second
  order {H}amiltonian systems, J. Diff. Eq. 219~(2) (2005) 375--389.

\bibitem{ZelRab91}
V.~Coti~Zelati, P.~H. Rabinowitz, Homoclinic orbits for second order
  {H}amiltonian systems possessing superquadratic potentials, J. Amer. Math.
  Soc. 4~(4) (1991) 693--727.

\bibitem{LvLu12}
X.~Lv, S.~Lu, Homoclinic solutions for ordinary p-laplacian systems., Applied
  Mathematics and Computation 218~(9) (2012) 5682--5692.

\bibitem{Cle04}
P.~Cl{\'e}ment, B.~Pagter, G.~Sweers, F.~Th{\'e}lin, Existence of solutions to
  a semilinear elliptic system through orlicz-sobolev spaces, Mediterranean
  Journal of Mathematics 1~(3) (2004) 241--267.

\bibitem{Bar17}
G.~Barletta, A.~Cianchi, Dirichlet problems for fully anisotropic elliptic
  equations, Proc. Royal Soc. Ed. 147~(1) (2017) 25–60.

\bibitem{Cle00}
P.~Cl\'ement, M.~Garc\'\i~a Huidobro, R.~Man\'asevich, K.~Schmitt, Mountain
  pass type solutions for quasilinear elliptic equations, Calc. Var. Partial
  Differential Equations 11~(1) (2000) 33--62.

\bibitem{Le02}
V.~K. Le, Nontrivial solutions of mountain pass type of quasilinear equations
  with slowly growing principal parts, Differential Integral Equations 15~(7)
  (2002) 839--862.

\bibitem{KraRut61}
M.~A. Krasnoselski\u{\i}, J.~B. Ruticki\u{\i}, {Convex functions and {O}rlicz
  spaces}, P. Noordhoff Ltd., Groningen, 1961.

\bibitem{Sch05}
G.~Schappacher, {A notion of {O}rlicz spaces for vector valued functions},
  Appl. Math. 50~(4) (2005) 355--386.

\bibitem{ChmMak17}
M.~Chmara, J.~Maksymiuk, Anisotropic {O}rlicz-{S}obolev spaces of vector valued
  functions and {L}agrange equations, J. Math. Anal. Appl. 456~(1) (2017)
  457--475.

\bibitem{RadRep15}
V.~D. Radulescu, D.~Repovs, {Partial Differential Equations with Variable
  Exponents Variational Methods and Qualitative Analysis}, Chapman and
  Hall/CRC, 2015.

\bibitem{Le14}
V.~K. Le, {On second order elliptic equations and variational inequalities with
  anisotropic principal operators}, Topol. Methods Nonlinear Anal. 44~(1)
  (2014) 41--72.

\bibitem{Sim64}
I.~B. Simonenko, Interpolation and extrapolation of linear operators in
  {O}rlicz spaces, Mat. Sb. (N.S.) 63 (105) (1964) 536--553.

\bibitem{Mal89}
L.~Maligranda, Orlicz spaces and interpolation, Vol.~5 of Semin\'arios de
  Matem\'atica [Seminars in Mathematics], 1989.

\bibitem{AmbRab73}
A.~Ambrosetti, P.~H. Rabinowitz, Dual variational methods in critical point
  theory and applications, Journal of Functional Analysis 14~(4) (1973) 349 --
  381.

\bibitem{Cla13}
F.~Clarke, Functional analysis, calculus of variations and optimal control,
  Vol. 264, Springer, London, 2013.

\end{thebibliography}


\end{document}